\newtheorem{theorem}{Theorem}[section]
\newtheorem{corollary}[theorem]{Corollary}
\newtheorem{lemma}[theorem]{Lemma}
\newtheorem{proposition}[theorem]{Proposition}
\newtheorem{remark}[theorem]{Remark}
\theoremstyle{definition}
\numberwithin{equation}{section}
\def\O{{\Omega}}
\def\eps{{\varepsilon}}
\def\m{{\mathcal{M}}}
\def\E{{\mathcal{E}}}
\def\L{{\mathcal{L}}}
\def\M{{\mathcal{M}}}
\def\R{{\mathbb{R}}}
\def\N{{\mathbb{N}}}
\newcommand{\ope}[1]{\E[{#1}]}
\newcommand{\lb}[1]{\L_{_{#1}}}
\newcommand{\oplb}[2]{\L_{_{#2}}[{#1}]}
\newcommand{\opm}[1]{\M[{#1}]}
\newcommand{\mb}[1]{\M_{_{#1}}}
\newcommand{\opmb}[2]{\M_{_{#2}}[{#1}]}
\newenvironment{formula}[1]{\begin{equation}\label{eq:#1}}
                       {\end{equation}\noindent}
\def\Fi#1{\begin{formula}{#1}}
\def\Ff{\end{formula}\noindent}
\begin{document}

\date{}
\title{\bf{On generalized principal eigenvalues of nonlocal operators with a drift}\thanks{This work has been carried out in the framework of Archim\`ede Labex of Aix-Marseille University. The project leading to this publication has received funding from Excellence Initiative of Aix-Marseille University~-~A*MIDEX, a French ``Investissements d'Avenir" programme, from the European Research Council under the European Union's Seventh Framework Programme (FP/2007-2013) ERC Grant Agreement n.~321186~- ReaDi~- Reaction-Diffusion Equations, Propagation and Modelling and from the ANR NONLOCAL project (ANR-14-CE25-0013).}}
\author{J\'er\^ome Coville \textsuperscript{a} and Fran\c{c}ois Hamel \textsuperscript{b}\\
\\
\footnotesize{\textsuperscript{a} UR 546 Biostatistique et Processus  Spatiaux, INRA, Domaine St Paul Site Agroparc, F-84000 Avignon, France}\\
\footnotesize{{\itshape email: }\ttfamily jerome.coville@inra.fr}\\
\footnotesize{\textsuperscript{b} Aix Marseille Univ, CNRS, Centrale Marseille, I2M, Marseille, France}\\
\footnotesize{{\itshape email: }\ttfamily francois.hamel@univ-amu.fr}}
\maketitle

\begin{abstract}
This article is concerned with the following spectral problem: to find a positive function $\varphi \in C^1(\O)$ and $\lambda\in \R$ such that 
$$q(x)\varphi'(x)+\int_{\O}J(x,y)\varphi(y)\,dy + a(x)\varphi(x)+\lambda \varphi(x)=0 \quad \textrm{for } x \in \O,$$
where $\O\subset \R$ is a non-empty domain (open interval), possibly unbounded, $J$ is a positive continuous  kernel, and $a$ and $q$ are continuous coefficients. Such a spectral problem naturally arises in the study of nonlocal population dynamics models defined in a space-time varying environment  encoding the influence of a climate change through a spatial shift of the coefficient. In such models, working directly in a moving frame that matches the spatial shift leads to consider a problem where the dispersal of the population is modeled by a nonlocal operator with a drift term. Assuming that the drift $q$ is a  positive function, for rather general assumptions on $J$ and $a$, we prove  the existence of a principal eigenpair $(\lambda_p,\varphi_p)$ and derive some of  its main properties. In particular, we prove that $\lambda_p(\O)=\lim_{R\to+\infty}\lambda_p(\O_R)$, where $\O_R=\O\cap(-R,R)$ and $\lambda_p(\O_R)$ corresponds to the principal eigenvalue of the truncation  operator defined in $\O_R$. The proofs especially rely on the derivation of a new Harnack type inequality for positive solutions of such problems.        
\end{abstract}

%\tableofcontents

%%%%%%%%%%%%%%%%%%%%%%%%%%%%%%%%%%%%%%%%%%%%%%%%%
%%%%%%%%%%%%%%%%%%%%%%%%%%%%%%%%%%%%%%%%%%%%%%%%%

\section{Introduction}

In this paper, we are interested in following  spectral problem: to find a positive function $\varphi \in C^1(\O)$ and $\lambda\in \R$ such that 
\begin{equation}\label{eq}
q(x)\varphi'(x) +\int_{\O}J(x,y)\varphi(y)\,dy + a(x)\varphi(x)+\lambda \varphi(x)=0 \quad \textrm{for all } x \in \O,
\end{equation}
where $\O\subset \R$ is a non-empty domain (namely, a non-empty open interval), $J$ is a positive continuous  kernel in $\O\times\O$, and $a$ and $q$ are continuous bounded coefficients in $\Omega$. The precise assumptions on $J$, $q$ and $a$ will be given later on. 
 
The existence of such an eigenvalue $\lambda$, called the principal eigenvalue since the eigenfunction $\varphi$ is positive, is an important tool in modern analysis and it is at the origin of many profound results, especially in the study of elliptic and parabolic linear and semilinear problems. For instance, such an eigenpair has been  used to characterize the stability of some equilibria of reaction-diffusion equations in connection with persistence criteria, see e.g. \cite{Berestycki2005,Cantrell1989,Cantrell1991a,Cantrell1998,Englander2004,Pinsky1996,Kawasaki1997}. It is also an important tool in the characterization  of  maximum principle properties satisfied by elliptic operators \cite{Berestycki1994,Berestycki2015} and in the description of continuous  semi-groups that preserve an order \cite{Akian2014,Donsker1975,Lemmens2012}. Before going into further details, let us first explain what is meant by a principal eigenvalue for such nonlocal, as well as for local, operators.

%%%%%%%%%%%%%%%%%%%%%%%%%%%%%%%%%%%%%%%%%%%%%%%%%

\subsection{Some notions of principal eigenvalues of local and nonlocal operators}

For most positive operators defined on a set of functions over a domain $\Omega\subset\R^N$, the notion of principal eigenvalue is related to the existence of a particular eigenpair, namely an eigenvalue associated with a positive eigenfunction. For the operator $\lb{\O,J}+a$ defined on $C(\overline{\O})$ by
\begin{equation}\label{defLOmega}
\oplb{\varphi}{\O,J}+a\varphi:=\int_{\O}J(\cdot,y)\varphi(y)\,dy+a\varphi,
\end{equation}
the definition of the principal eigenvalue based on the existence of such an eigenpair does not necessarily hold anymore. Indeed, as noticed in \cite{Coville2010,Coville2013,Kao2010,Shen2010},   when the function $a$ is not constant, for any real number $\lambda$, neither $\lb{\O,J} +a+\lambda$ nor its inverse  are  compact operators and the operator $\lb{\O,J}+a$ may not have any eigenvalues in the spaces $L^p(\O)$ or $C(\overline{\Omega})$. However, a notion of generalised principal eigenvalue still persists and can  be defined for such operators  \cite{Berestycki2016b, Coville2010,Coville2015,Coville2013,Coville2017a,Kao2010,Shen2010}. Namely, for such an operator $\lb{\O,J}+a$, the generalised principal  eigenvalue is then defined by the quantity
\begin{equation}\label{defev1}
\lambda_p(\lb{\O,J}+a):=\sup\big\{\lambda \in \R\ |\ \exists\,\varphi \in C(\overline{\Omega}), \varphi>0\hbox{ and }\oplb{\varphi}{\O,J}+a\varphi+\lambda\varphi \le 0\hbox{ in }\O\big\},
\end{equation}
which can be  expressed equivalently by the sup-inf formula:
$$\lambda_p(\lb{\O,J}+a)=\sup_{\varphi\in C(\overline{\Omega}),\ \varphi >0\hbox{ in }\O}\ \inf_{x\in \O}\left(-\frac{\oplb{\varphi}{\O,J}(x)+a(x)\varphi(x)}{\varphi(x)}\right).$$

This quantity was originally introduced in the Perron-Frobenius theory to characterize the principal eigenvalue of a positive irreducible positive matrix \cite{Collatz1942,Wielandt1950}. Namely, for a positive irreducible matrix $A,$  the eigenvalue $\lambda_1(A)$ associated with a positive eigenvector can be characterized as follows: 
$$\lambda_1(A)=\sup_{x\in \R^N,\ x>0}\ \inf_{i\in\{1,\cdots,N\}}\left(-\frac{(Ax)_i}{x_i}\right)=\inf_{x\in \R^N,\ x>0}\ \sup_{i\in\{1,\cdots,N\}}\left(-\frac{(Ax)_i}{x_i}\right),$$
also known as the Collatz-Wieldandt characterization (for $x=(x_1,\cdots,x_N)\in\R^N$, we say that $x>0$ if $x_i>0$ for all $i\in\{1,\cdots,N\}$).

As in the Perron-Frobenius theory, similar inf-sup formulas have been  developed to characterize the spectral properties of elliptic second-order local operators satisfying a maximum principle,  see  the fundamental works of Donsker and Varadhan \cite{Donsker1975},  Nussbaum and Pinchover \cite{Nussbaum1992}, Berestycki, Nirenberg and Varadhan \cite{Berestycki1994} and Pinsky \cite{Pinsky1995a,Pinsky1995}. In particular, for an elliptic operator $\E:\varphi\mapsto \E[\varphi]=a_{ij}\partial_{ij}\varphi+b_i\partial_i\varphi +c\varphi$ with bounded continuous coefficients defined in a bounded domain $\O\subset \R^N$, several notions of principal eigenvalue with Dirichlet boundary conditions have been introduced. On the one hand, Donsker and Varadhan \cite{Donsker1975} have introduced  a quantity $\lambda_V(\E)$,  called  principal eigenvalue of $\E$, given by
$$\lambda_V(\E):=\inf\big\{\lambda \in \R \ |\ \exists \, \varphi \in dom(\E),\ \varphi > 0\text{ and }\ope{\varphi} + \lambda\varphi\ge 0\text{ in } \O\big\}=\inf_{\varphi\in dom(\E),\,\varphi> 0\hbox{ in }\O}\ \sup_{x\in\O}\left(\!-\frac{\ope{\varphi}(x)}{\varphi(x)}\right)\!,$$
where  $dom(\E)\subset C^2(\O)\cap C_0(\overline{\Omega})$ denotes the domain of definition of $\E$ and $C_0(\overline{\O})$ is the set of continuous functions in $\overline{\O}$ which vanish on $\partial\O$. On the other hand, Berestycki, Nirenberg and Varadhan \cite{Berestycki1994} have introduced  $\lambda_1(\E)$ defined by
\begin{align*}\label{bcv-eq-lp-ellip}
\lambda_1 (\E) &:= \sup\big\{\lambda \in \R \ |\ \exists \, \varphi \in W^{2,N}_{loc}(\O)\cap C(\overline{\O}),\ \varphi > 0\hbox{ in }\O\text{ and }\ope{\varphi}+ \lambda\varphi\le 0\; \text{ a.e. in }\O\},\\
&\,=\sup_{\varphi\in W^{2,N}_{loc}(\O)\cap C(\overline{\O}),\ \varphi>0\hbox{ in }\O}\,\displaystyle\mathop{\rm{essinf}}_{x\in \O}\left(-\frac{\ope{\varphi}(x)}{\varphi(x)}\right)
 \end{align*} 
as another possible definition for the principal eigenvalue of $\E$. When $\O$ is a smooth bounded domain and~$\E$ has smooth coefficients, both notions coincide (that is, $\lambda_V(\E)=\lambda_1(\E)$).  The equivalence of these two notions has been recently extended to more general elliptic operators. In particular the equivalence holds true in any bounded domain $\O$, and in any domain when $\E$ is an elliptic self-adjoint operator with bounded coefficients \cite{Berestycki2015}.  For the interested reader, we refer  to  \cite{Berestycki2015} for a review and comparison of the different  notions of principal eigenvalues for an elliptic operator defined in a bounded or unbounded domain.
 
Similarly, several notions of principal eigenvalues have been defined for the nonlocal operator $\lb{\O,J}+a$. In particular $\lambda_p(\lb{\O,J}+a)$ given in~\eqref{defev1} has been compared to the following definitions \cite{Berestycki2016a,Coville2010,Coville2015,Coville2017a,Donsker1975,Garcia-Melian2009b,Ignat2012,Kao2010,Shen2010,Shen2012}: 
\begin{align*}
&\lambda_p'(\lb{\O,J} +a):=\inf\big\{\lambda\in\R\ |\ \exists\,\varphi \in C(\O)\cap L^{\infty}(\O)\setminus\{0\},\ \varphi\ge0\text{ and }\oplb{\varphi}{\O,J} +a\varphi+\lambda\varphi\ge 0 \text{ in }\O \big\}\vspace{3pt}\\
&\lambda_p''(\lb{\O,J} +a):=\inf\big\{\lambda\in\R\ |\ \exists\,\varphi \in C_c(\O)\setminus\{0\},\ \varphi\ge0 \text{ and }\oplb{\varphi}{\O,J}+a\varphi+\lambda\varphi\ge 0\text{ in }\O\big\},
\end{align*}
where $C_c(\Omega)$ denotes the space of continuous functions with compact support included in $\Omega$, or when $\lb{\O,J}+a$ is a self-adjoint operator (with symmetric kernel $J$):
\begin{align*}
\lambda_v(\lb{\O,J} +a)&:=\inf_{\varphi \in L^2(\O),\,\|\varphi\|_{L^2(\O)}=1} -\langle \oplb{\varphi}{\O,J} +a\varphi,\varphi\rangle\\
&=\inf_{\varphi \in L^2(\O),\,\|\varphi\|_{L^2(\O)}=1}\displaystyle\frac{1}{2}\iint_{\O\times\O}\!J(x,y)[\varphi(x)-\varphi(y)]^2\,dxdy -\int_{\O}\!\left[a(x)+\int_{\O}J(x,y)\,dy\right]\!\varphi^2(x)\,dx,
\end{align*}
where $ \langle\,,\rangle$ denotes the usual scalar product in $L^2(\O)$. As for elliptic operators, the equivalence of the different notions have been obtained \cite{Berestycki2016a,Coville2015,Coville2017a} when~$\O$ is a bounded domain, and, under quite general assumptions on the kernel, there holds
$$\lambda_p(\lb{\O,J}+a)=\lambda'_p(\lb{\O,J}+a)=\lambda''_p(\lb{\O,J}+a)$$
and these quantities are equal to $\lambda_v(\lb{\O,J} +a)$ in the symmetric case. These equivalences still persist when~$\O$ is unbounded provided the kernel $J$ satisfies some symmetry properties, see \cite{Berestycki2016a,Coville2017a}. 

In this article, we investigate whether similar quantities can be defined for the first-order nonlocal operator $\mb{\O,J,q}+a$ defined by
\begin{equation}\label{defoperator}
\mb{\O,J,q}[\varphi]+a\varphi:=q\varphi'+\lb{\O,J}[\varphi]+a\varphi
\end{equation}
for $\varphi\in C^1(\Omega)\cap C(\overline{\Omega})$, when $\Omega\subset\R$ is a non-empty bounded or unbounded open interval. We will also wonder whether such quantities can be used as surrogates of the existence of a principal eigenvalue. In the spirit of the definition of the generalised principal eigenvalue of the operator $\lb{\O,J}+a$, and following the ideas originally introduced by Berestycki, Nirenberg and Varadhan \cite{Berestycki1994} and developed in a series of papers \cite{Berestycki2007,Berestycki2006,Berestycki2015,Nadin2009b,Rossi2009}, we introduce the following quantity 
\begin{equation}\label{ch-pev-def-lp}
\lambda_p(\mb{\O,J,q}+a):= \sup\big\{\lambda \in \R\ |\ \exists\,\varphi \in C^1(\O)\cap C(\overline{\Omega}),\ \varphi>0\text{ and }\opmb{\varphi}{\O,J,q}+a\varphi +\lambda\varphi \le 0\text{ in }\O\big\}
\end{equation}
and we study its main properties. We shall also prove some Harnack type inequalities of independent interest for this type of operators.

%%%%%%%%%%%%%%%%%%%%%%%%%%%%%%%%%%%%%%%%%%%%%%%%%

\subsection{A motivation provided by some nonlocal reaction diffusion models}

Our interest in studying the properties of $\lambda_p(\mb{\O,J,q}+a)$ stems from the recent studies of populations having a long range dispersal strategy \cite{Berestycki2016a,Coville2010,Coville2015,Kao2010,Shen2010}. For such populations, a commonly used model that integrates such long range dispersal  is the following nonlocal reaction diffusion equation (\cite{Fife1996,Grinfeld2005,Hutson2003,Lutscher2005,Turchin1998}): 
\begin{equation}
\partial_t u(t,x)=\int_{\O}J(x-y)u(t,y)\,dy - u(t,x)\int_{\O}J(y-x)\,dy + f(t,x,u(t,x))\ \text{ for }(t,x)\in[0,+\infty)\times \O. \label{ch-eq-dyn}
\end{equation}
In this context, $u(t,x)$ is  the density of the considered population, $J$ is a dispersal kernel and $f(t,x,s)$ is a nonlinear function describing the demography of the population evolving in a heterogeneous environment possibly varying in time. In the context of modeling the effect of the climate change on the survival of a population, it is natural to set the problem in $\O=\R$. In order to reflect the impact of the climate shift on the demography of the population, the nonlinearity $f$ is assumed to take the following form $f(t,x,s)=\widetilde{f}(x-ct,s)$ (\cite{Berestycki2009,Zhou2010}), where $c$ is a positive constant modelling the speed of the shift. Within this context, the equation~\eqref{ch-eq-dyn} reads
\begin{equation}
\partial_t u(t,x)=\int_{\R}J(x-y)u(t,y)\,dy - u(t,x)\int_{\R}J(z)\,dz  + \widetilde{f}(x-ct,u(t,x))\ \text{ for }(t,x)\in[0,+\infty)\times \R. \label{ch-eq-cc}
\end{equation}

For this model~\eqref{ch-eq-cc}, the main goal is then to find conditions on $J$, $c$ and $\widetilde{f}$ that characterize the persistence of the species. In this task, the existence of a particular positive solution of~\eqref{ch-eq-cc}, i.e. a positive solution that is stationary in the moving frame of speed $c$ (that is, $u(t,x)=v(x-ct)$) is expected to provide a clear characterization of the dynamics of the population. This analysis leads to look for the existence of a non-trivial solution to 
\begin{equation}
 -cv'(x)=J\star v(x) -v(x) + \widetilde{f}(x,v(x))\ \text{ in }\R. \label{ch-eq}
 \end{equation}
When $c=0$ and $\widetilde{f}$ is of KPP type with a bounded niche (that is, $f(\cdot,0)=0$ in $\R$, $\widetilde{f}(x,s)\le\partial_s\widetilde{f}(x,0)s$ for all $(x,s)\in\R\times\R_+$, $\widetilde{f}(x,s)\le0$ for all $s\ge A$, and $\partial_s\widetilde{f}(x,0)<0$ for all $|x|\ge A$, for some $A>0$),  the existence of a non-trivial solution to~\eqref{ch-eq} is strongly related to the characterization of the stability of the trivial solution $v\equiv 0$. More precisely, setting $\m$ the operator defined by $\opm{\varphi}=J\star\varphi -\varphi$ (that is, $\m=\mathcal{L}_{\R,\widetilde{J}}-1$ with $\widetilde{J}(x,y)=J(x-y)$), the existence of a non-trivial solution to~\eqref{ch-eq} is governed by  the sign of the principal eigenvalue $\lambda_p(\m+\partial_s\widetilde{f}(\cdot,0))$ \cite{Berestycki2016a}. In particular,  there exists a positive solution to~\eqref{ch-eq} if and only if $\lambda_p(\m +\partial_s\widetilde{f}(x,0))<0$. When $c>0$, it is expected that a similar criterium can be derived  with the suited notion of principal eigenvalue for the operator $\lambda_p(\mb{\R,\widetilde{J},c}-1+\partial_s\widetilde{f}(\cdot,0))$.

%%%%%%%%%%%%%%%%%%%%%%%%%%%%%%%%%%%%%%%%%%%%%%%%%

\subsection{Assumptions and main results} 
 
Let us now make more precise the assumptions on the coefficients $J$, $q$ and $a$. All along the paper, $\Omega$ denotes a non-empty open interval of $\R$ and we assume that $J:\O\times\O\to\R_+$ is a nonnegative Caratheodory function, that is,
\begin{equation}\label{assum1}
J(x,\cdot)\text{ is measurable for all }x\in\O,\ \ J(\cdot,y) \text{ is uniformly continuous for almost every }y\in \O,\tag{A1}
\end{equation}
and we also require that $J$ satisfies the  following non-degeneracy conditions:
\begin{equation}
\exists\,0<\kappa_0\le\kappa_1,\ \exists\,0<\delta_0\le\delta_1,\ \forall(x,y)\in\O\times\O,\ \ \kappa_0\,\mathds{1}_{(x-\delta_0,x+\delta_0)\cap\O}(y)\le  J(x,y) \le  \kappa_1\,\mathds{1}_{(x-\delta_1,x+\delta_1)\cap\O}(y). \label{assum2} \tag{A2}
\end{equation}
We also assume that $q$ and $a$ satisfy 
\begin{equation}
q,a \in C(\O)\cap L^{\infty}(\O)\ \hbox{ and }\ q\neq0\hbox{ in }\O\label{assum3}\tag{A3}
\end{equation}
and, in some statements,
\begin{equation}
\inf_{\O}|q|>0.\label{assum4}\tag{A4} 
\end{equation}
Notice immediately that under the only assumptions~\eqref{assum1}-\eqref{assum3}, the function $\mb{\O,J,q}[\varphi]+a\varphi$ given in~\eqref{defoperator} and~\eqref{defLOmega} is well defined and continuous in $\O$ for any $\varphi\in C^1(\O)\cap C(\overline{\O})$. In~\eqref{assum3} (but not in~\eqref{assum4}), the function $q$ might well vanish on $\partial\O$, in case it were assumed to be a continuous in $\overline{\O}$.

For the operator $\mb{\O,J,q}+a$ in a non-empty bounded or unbounded open interval $\O\subset\R$, to our knowledge almost nothing is known. The first result describing the existence of a principal eigenvalue for $\mb{\O,J,q}+a$ has been recently obtained in \cite{Coville2017a}. Namely, if $\Omega=(r_1,r_2)$ with $r_1<r_2$ is a bounded interval and assuming that $q<0$, the authors prove that the following quantity
\begin{equation}\label{defmu1}\begin{array}{rl}
\mu_1(\mb{\O,J,q}+a):=\sup\big\{\lambda \in \R \ |\ \exists\,\varphi \in C^1(\O)\cap C(\overline{\O}), & \!\!\varphi>0\hbox{ in }\O,\ \varphi(r_1)=0,\vspace{3pt}\\
& \!\!\opmb{\varphi}{\O,J,q}+a\varphi +\lambda\varphi \le 0\text{ in }\O\big\}\end{array}
\end{equation}
is a well defined real number and moreover that the supremum is achieved, i.e. there exists $\varphi_1\in C^1(\O)\cap C(\overline{\O})$ such that
\begin{equation}\label{varphi1}\left\{\begin{array}{rcl}
\opmb{\varphi_1}{\O,J,q}+a\varphi_1 +\mu_1(\mb{\O,J,q}\!+\!a)\varphi_1 & = & 0\ \text{ in }\O=(r_1,r_2),\vspace{3pt}\\
\varphi_1 & > & 0\ \hbox{ in }\O=(r_1,r_2),\vspace{3pt}\\
\varphi_1(r_1) & = & 0,\end{array}\right.
\end{equation}
see Theorem~\ref{ch-thm-clw} below. Note that the sets of test functions considered to define $\mu_1(\mb{\O,J,q}+a)$ in~\eqref{defmu1} and $\lambda_p(\mb{\O,J,q}+a)$ in~\eqref{ch-pev-def-lp} are slightly different and from the respective definitions we can always infer that 
$$\mu_1(\mb{\O,J,q}+a)\le \lambda_p(\mb{\O,J,q}+a).$$ 

Our first result is to define properly the notion of principal eigenvalue and principal eigenfunction when $q>0$. To this end, still when $\Omega=(r_1,r_2)$ is a non-empty bounded interval, let us introduce the following quantity
\begin{equation}\label{defmutilde1}\begin{array}{rl}
\widetilde\mu_1(\mb{\O,J,q}+a):=\sup\big\{\lambda \in \R \ |\ \exists\,\varphi \in C^1(\O)\cap C(\overline{\O}), & \!\!\varphi>0\hbox{ in }\O,\ \varphi(r_2)=0,\vspace{3pt}\\
& \!\!\opmb{\varphi}{\O,J,q}+a\varphi +\lambda\varphi \le 0\text{ in }\O\big\}\end{array}
\end{equation}

\begin{proposition}\label{ch-thm1}
Assume that $\O=(r_1,r_2)\subset \R$ $($with $r_1<r_2$$)$ is a bounded domain and let $J$, $q$ and~$a$ satisfy assumptions~\eqref{assum1}-\eqref{assum3} and $q>0$ in $\O$. Then $\widetilde \mu_1(\mb{\O,J,q}\!+\!a)$ is a real number associated with a function $\varphi_1\in C^1(\O)\cap C(\overline{\O})$ such that 
$$\left\{\begin{array}{rcl}
\opmb{\varphi_1}{\O,J,q}+a\varphi_1 +\widetilde\mu_1(\mb{\O,J,q}\!+\!a)\varphi_1 & = & 0\ \text{ in }\O=(r_1,r_2),\vspace{3pt}\\
\varphi_1 & > & 0\ \hbox{ in }\O=(r_1,r_2),\vspace{3pt}\\
\varphi_1(r_2) & = & 0.\end{array}\right.$$
\end{proposition}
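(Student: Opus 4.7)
The plan is to reduce the statement to Theorem~\ref{ch-thm-clw} by a reflection of the interval. Since that theorem is formulated for a negative drift with a Dirichlet condition at the left endpoint, while here $q>0$ and the Dirichlet condition is at the right endpoint, a symmetry across the midpoint of $\O$ should swap everything correctly.

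More precisely, let $\sigma:\O\to\O$ be the involution $\sigma(x)=r_1+r_2-x$, which exchanges the endpoints $r_1$ and $r_2$. Define transformed coefficients
\[
\widetilde J(x,y):=J(\sigma(x),\sigma(y)),\qquad \widetilde q(x):=-q(\sigma(x)),\qquad \widetilde a(x):=a(\sigma(x)),
\]
and associate to any $\varphi\in C^1(\O)\cap C(\overline\O)$ the function $\widetilde\varphi(x):=\varphi(\sigma(x))$. A direct computation, using $\widetilde\varphi'(x)=-\varphi'(\sigma(x))$ and the change of variable $z=\sigma(y)$ in the integral, gives the intertwining identity
\[
\widetilde q(x)\widetilde\varphi'(x)+\int_{\O}\widetilde J(x,y)\widetilde\varphi(y)\,dy+\widetilde a(x)\widetilde\varphi(x)=\bigl(q\varphi'+\lb{\O,J}[\varphi]+a\varphi\bigr)(\sigma(x))
\]
for every $x\in\O$. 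Hence $\varphi$ satisfies $\opmb{\varphi}{\O,J,q}+a\varphi+\lambda\varphi\le 0$ in $\O$ if and only if $\widetilde\varphi$ satisfies the same inequality with the coefficients $\widetilde J,\widetilde q,\widetilde a$ replacing $J,q,a$, and similarly for equalities.

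Next I would check that $\widetilde J,\widetilde q,\widetilde a$ satisfy the assumptions~\eqref{assum1}-\eqref{assum3}: the Caratheodory character and the $L^\infty$ bounds are trivially preserved, and the non-degeneracy~\eqref{assum2} transports because $\sigma$ is an isometry mapping $\O$ onto $\O$, so that $\sigma(y)\in(\sigma(x)-\delta,\sigma(x)+\delta)\cap\O$ is equivalent to $y\in(x-\delta,x+\delta)\cap\O$ for every $\delta>0$. Moreover $\widetilde q<0$ in $\O$ since $q>0$, and the reflected Dirichlet condition becomes $\widetilde\varphi(r_1)=0$. Comparing the definitions~\eqref{defmu1} and~\eqref{defmutilde1} through the bijection $\varphi\mapsto\widetilde\varphi$ then yields the key identity
\[
\widetilde\mu_1(\mb{\O,J,q}+a)=\mu_1(\mb{\O,\widetilde J,\widetilde q}+\widetilde a).
\]

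Finally, applying Theorem~\ref{ch-thm-clw} to the right-hand side produces a real number $\mu_1(\mb{\O,\widetilde J,\widetilde q}+\widetilde a)$ and an associated eigenfunction $\psi_1\in C^1(\O)\cap C(\overline\O)$ with $\psi_1>0$ in $\O$, $\psi_1(r_1)=0$, and solving the eigenvalue equation with coefficients $\widetilde J,\widetilde q,\widetilde a$. Setting $\varphi_1:=\psi_1\circ\sigma$ and invoking the intertwining identity gives the desired eigenpair for the original operator. There is no analytic difficulty here beyond the bookkeeping of the change of variables; the only step that requires any care is verifying that the non-degeneracy bounds in~\eqref{assum2} transfer correctly, and that the precise domain of test functions (including the boundary value at $r_1$ versus $r_2$) matches between the two characterizations~\eqref{defmu1} and~\eqref{defmutilde1}.
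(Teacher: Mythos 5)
Your reflection argument is exactly the proof given in the paper: the authors define $\widetilde q(x)=-q(r_1+r_2-x)$, $\widetilde a(x)=a(r_1+r_2-x)$, $\widetilde J(x,y)=J(r_1+r_2-x,r_1+r_2-y)$, verify the intertwining and the identity $\mu_1(\mb{\O,\widetilde J,\widetilde q}+\widetilde a)=\widetilde\mu_1(\mb{\O,J,q}+a)$, and then apply Theorem~\ref{ch-thm-clw} and reflect the resulting eigenfunction back. Your proposal is correct and follows essentially the same route, including the care taken with the transfer of~\eqref{assum2} and the boundary condition.
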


As already observed in the situation $q<0$,  the sets of test functions considered to define $\widetilde \mu_1(\mb{\O,J,q}\!+\!a)$ in~\eqref{defmutilde1} and $\lambda_p(\mb{\O,J,q}+a)$ in~\eqref{ch-pev-def-lp} are slightly different and, owing on  the respective definitions, we  can also always infer that 
$$\widetilde \mu_1(\mb{\O,J,q}+a)\le \lambda_p(\mb{\O,J,q}+a).$$ 

Our next result is to prove that both quantities $\mu_1(\mb{\O,J,q}+a)$ and $\widetilde \mu_1(\mb{\O,J,q}+a)$ are characterizations of the quantity $\lambda_p(\mb{\O,J,q}+a)$, according to the sign of $q$, and that a Collatz-Wielandt type of characterization holds as well. Namely, we prove the following result.

\begin{theorem}\label{ch-thm2}
Assume that $\O=(r_1,r_2)\subset \R$ $($with $r_1<r_2$$)$ is a bounded domain and let $J$, $q$ and $a$ satisfy assumptions~\eqref{assum1}-\eqref{assum4}. Then 
$$\lambda_p(\mb{\O,J,q}+a)=\lambda_p'(\mb{\O,J,q}+a),$$
where  
\begin{equation}\label{deflambdap'}\left\{\begin{array}{l}
\lambda_p'(\mb{\O,J,q}\!+\!a):=\inf\big\{\lambda \in \R \ |\ \exists\,\varphi \in C^1(\O)\cap C(\overline{\O}),\ \varphi(r_1)=0,\\
\qquad\qquad\qquad\qquad\qquad\qquad\qquad\qquad\varphi>0\hbox{ and }\opmb{\varphi}{\O,J,q}+a\varphi +\lambda\varphi \ge 0 \text{ in }\O\big\}\ \hbox{ if }q<0,\vspace{5pt}\\
\lambda_p'(\mb{\O,J,q}\!+\!a):=\inf\big\{\lambda \in \R \ |\ \exists\,\varphi \in C^1(\O)\cap C(\overline{\O}),\ \varphi(r_2)=0,\\
\qquad\qquad\qquad\qquad\qquad\qquad\qquad\qquad\varphi>0\hbox{ and }\opmb{\varphi}{\O,J,q}+a\varphi +\lambda\varphi \ge 0 \text{ in }\O\big\}\ \hbox{ if }q<0.\end{array}\right.
\end{equation}
In addition, we have
\begin{align*}
& \mu_1(\mb{\O,J,q}+a)=\lambda_p(\mb{\O,J,q}+a)=\lambda_p'(\mb{\O,J,q}+a)\ \text{ if } q<0,\vspace{3pt}\\
& \widetilde \mu_1(\mb{\O,J,q}+a)=\lambda_p(\mb{\O,J,q}+a)=\lambda_p'(\mb{\O,J,q}+a) \ \text{ if } q>0.
\end{align*}
\end{theorem}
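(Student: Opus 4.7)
The key input is Proposition~\ref{ch-thm1} for $q>0$ and the analogous Theorem~\ref{ch-thm-clw} for $q<0$, which each produce a principal eigenpair: a positive function $\varphi_1\in C^1(\O)\cap C(\overline{\O})$ vanishing at the ``downstream'' endpoint (at $r_2$ if $q>0$, at $r_1$ if $q<0$) and satisfying $\opmb{\varphi_1}{\O,J,q}+a\varphi_1+\widetilde\mu_1\varphi_1=0$ (resp.\ $+\mu_1\varphi_1=0$). Since $\varphi_1$ realises equality in the defining inequality, it is admissible simultaneously as a subsolution in~\eqref{ch-pev-def-lp} and as a supersolution in~\eqref{deflambdap'}, yielding at once the sandwich
$$\lambda_p'(\mb{\O,J,q}+a)\ \le\ \widetilde\mu_1(\mb{\O,J,q}+a)\ \le\ \lambda_p(\mb{\O,J,q}+a)\qquad(q>0),$$
and the analogous one with $\mu_1$ when $q<0$. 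The theorem therefore reduces to establishing the reverse inequality $\lambda_p(\mb{\O,J,q}+a)\le\lambda_p'(\mb{\O,J,q}+a)$.

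I would prove this reverse inequality by a sliding/Collatz--Wielandt argument; take the case $q>0$, the case $q<0$ being symmetric. Fix arbitrary $\lambda'<\lambda_p$ and $\lambda''>\lambda_p'$, and aim at $\lambda'\le\lambda''$. By definition, there exist a subsolution $\varphi\in C^1(\O)\cap C(\overline{\O})$, $\varphi>0$ in $\O$, with $\opmb{\varphi}{\O,J,q}+(a+\lambda')\varphi\le 0$, and a supersolution $\psi\in C^1(\O)\cap C(\overline{\O})$, $\psi>0$ in $\O$, $\psi(r_2)=0$, with $\opmb{\psi}{\O,J,q}+(a+\lambda'')\psi\ge 0$. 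A preliminary step, using~\eqref{assum4} and plugging the inequalities into the equation at the endpoints, is the following boundary dichotomy: $\varphi(r_1)>0$ is forced (otherwise $q(r_1)\varphi'(r_1)\le-\int_\O J(r_1,y)\varphi(y)dy<0$ with $q(r_1)>0$ would give $\varphi'(r_1)<0$, contradicting $\varphi\ge 0$ on the right of $r_1$); at $r_2$, either $\varphi(r_2)>0$, or $\varphi(r_2)=0$ with $\varphi'(r_2)<0$, so $\varphi$ decays linearly at $r_2$. The same linear vanishing holds for $\psi$ at $r_2$. These estimates make $t^*:=\inf\{t>0:t\varphi\ge\psi\text{ on }\overline{\O}\}$ a finite positive real number, and by compactness there is a contact point $x_0\in\overline{\O}$ where $t^*\varphi(x_0)=\psi(x_0)$.

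The decisive step is the contact analysis applied to $\phi:=t^*\varphi-\psi\ge 0$. Linearity of the operator gives the pointwise bound
$$\opmb{\phi}{\O,J,q}(x_0)+a(x_0)\phi(x_0)\ \le\ -\lambda't^*\varphi(x_0)+\lambda''\psi(x_0)\ =\ (\lambda''-\lambda')\,t^*\varphi(x_0).$$
If $x_0\in\O$, or $x_0=r_1$ (in which case $\varphi(r_1),\psi(r_1)>0$), then $\phi\ge 0$ attains its minimum at $x_0$ with $\phi(x_0)=0$ and $\phi'(x_0)$ having the sign that makes $q(x_0)\phi'(x_0)\ge 0$; together with $\int_\O J(x_0,y)\phi(y)dy\ge 0$ this forces $(\lambda''-\lambda')t^*\varphi(x_0)\ge 0$, and since $\varphi(x_0)>0$ we obtain $\lambda'\le\lambda''$. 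In the remaining case $x_0=r_2$, necessarily $\varphi(r_2)=0$, so both functions vanish and the infimum $t^*$ is driven by the boundary slopes: minimality yields $t^*\varphi'(r_2)=\psi'(r_2)$, hence $\phi(r_2)=\phi'(r_2)=0$, so the left-hand side reduces to $\int_\O J(r_2,y)\phi(y)dy$. The right-hand side equals $(\lambda''-\lambda')\cdot 0=0$; but~\eqref{assum2} makes the left-hand side strictly positive as soon as $\phi\not\equiv 0$ in a left-neighbourhood of $r_2$, which contradicts any strict inequality $\lambda'>\lambda''$ and closes the argument.

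The main obstacle I anticipate lies exactly in this boundary contact case at the Dirichlet endpoint: one must show that $t^*\varphi-\psi$ does not vanish identically in a neighbourhood of $r_2$ (otherwise an interior contact would arise and reduce to the easier case) and that the slope at the contact point is indeed zero. Both facts rely on $C^1$ regularity together with a propagation property of the nonlocal term, and this is essentially a boundary version of the strong maximum principle for $\mb{\O,J,q}+a$; it is very likely the point where the Harnack-type inequality announced in the abstract enters decisively.
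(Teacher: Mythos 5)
Your overall architecture is sound and close in spirit to the paper's, but organized differently: the paper never compares an arbitrary subsolution with an arbitrary supersolution. It runs two separate comparisons, each anchored at the exact eigenfunction $\varphi_1$ (so one of the two functions satisfies an \emph{equation} and, thanks to the assumed strict gap $\mu_1<\lambda_p$ or $\lambda_p'<\mu_1$, the other satisfies a \emph{strict} inequality at the level $\mu_1$); the difference $w$ is then a strict supersolution and every location of the touching point is excluded outright. Your single comparison ($\lambda'<\lambda_p$, $\lambda''>\lambda_p'$, conclude $\lambda'\le\lambda''$) is a legitimate variant and arguably a cleaner logical reduction; the price is that $\phi=t^*\varphi-\psi$ satisfies no strict inequality, so the contact analysis must extract the sign of $\lambda''-\lambda'$ rather than a contradiction. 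Your interior and $x_0=r_1$ cases do this correctly, modulo the routine replacement of pointwise derivatives at the endpoints by $\liminf/\limsup$ along interior sequences (the data are only continuous up to the boundary and $C^1$ in the open interval, exactly as handled in the paper).

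The genuine gap is the contact case $x_0=r_2$ (for $q>0$). Minimality of $t^*$ does \emph{not} yield $t^*\varphi'(r_2)=\psi'(r_2)$: extremality of a family of multipliers gives no pointwise information on one-sided slopes, and the assertion $\phi'(r_2)=0$ is unjustified. The correct resolution — which is precisely the endpoint argument the paper uses for $w$ at the Dirichlet endpoint — is that this case cannot occur as the \emph{only} contact. Assume without loss of generality that $\phi>0$ on $[r_1,r_2)$ (otherwise an interior or $r_1$ contact exists and you are done). Then $\int_\O J(x,y)\phi(y)\,dy\to\int_\O J(r_2,y)\phi(y)\,dy>0$ as $x\to r_2^-$ by \eqref{assum2}, so the inequality satisfied by $\phi$ forces $q\phi'\le -c<0$ near $r_2$, hence $\phi(x)\ge c'(r_2-x)$ by \eqref{assum4}; combined with the linear upper bound $\psi(x)\le C(r_2-x)$ (from the supersolution inequality and \eqref{assum4}) and $\min_{[r_1,r_2-\eta]}\phi>0$, this yields $\phi\ge\eps\psi$ on $[r_1,r_2]$, i.e. $(t^*/(1+\eps))\varphi\ge\psi$, contradicting the definition of $t^*$. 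Two further remarks: your preliminary claim that $\psi$ vanishes at most linearly at $r_2$ is an upper bound $\psi\le C(r_2-x)$ (a lower bound would be false in general and is not needed); and the Harnack inequality of Section~\ref{sec3} plays no role here — in the paper it is used only for the exhaustion argument of Theorem~\ref{ch-thm3} — so it is not the missing ingredient you suspect.
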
 

Notice that the test functions involved in~\eqref{deflambdap'} are Lipschitz continuous in $\O=(r_1,r_2)$ and therefore can be extended continuously at $r_1$ and $r_2$.

An immediate consequence of Theorem~\ref{ch-thm2} is that, if $\O=(r_1,r_2)\subset \R$ with $r_1<r_2$, then there exists a positive eigenfunction $\varphi=\varphi_1$ for~\eqref{eq} with the eigenvalue $\lambda=\lambda_p(\mb{\O,J,q}+a)$ as soon as $q\in C(\O)\cap L^\infty(\O)$ does not vanish in~$\Omega$.

Another consequence of the previous result is that, if $\Omega\subset\R$ is a non-empty bounded domains, then $\mu_1(\mb{\O,J,q}+a)$, $\widetilde{\mu}_1(\mb{\O,J,q}+a)$ and $\lambda_p'(\mb{\O,J,q}+a)$ inherit the monotonicity properties that follow immediately from the definition of $\lambda_p(\mb{\O,J,q}+a)$ in~\eqref{ch-pev-def-lp}. These properties, which hold in bounded or unbounded domains, are listed in the following statement.

\begin{proposition}\label{ch-prop1}
Let $\Omega\subset\R$ be a non-empty bounded or unbounded domain and let $J$, $q$ and $a$ satisfy assumptions~\eqref{assum1}-\eqref{assum3}. Then the following holds.
\begin{itemize}
\item[(i)] If $\Omega'$ is a non-empty domain such that $\Omega'\subset\Omega$, then $\lambda_p(\mb{\O',J,q}+a)\ge \lambda_p(\mb{\O,J,q}+a)$.
\item[(ii)] If $a_1\ge a_2$ satisfy~\eqref{assum3}, then $\lambda_p(\mb{\O,J,q}+a_1)\le\lambda_p(\mb{\O,J,q}+a_2)$.
\item[(iii)] The map $a\mapsto\lambda_p(\mb{\O,J,q}+a)$ is Lipschitz continuous and $|\lambda_p(\mb{\O,J,q}\!+\!a)- \lambda_p(\mb{\O,J,q}\!+\!b)|\le \|a-b\|_{L^\infty(\O)}$ if $a$ and $b$ satisfy~\eqref{assum3}.
\item[(iv)]  The quantity $\lambda_p(\mb{\O,J,q}+a)$ is a real number and 
$$\lambda_p(\mb{\O,J,q}+a)\ge-\sup_{x\in\O}\left(a(x)+\int_{\O}J(x,y)\,dy\right)>-\infty.$$
 \item[(v)] If $J_1\ge J_2$ satisfy~\eqref{assum1} and~\eqref{assum2}, then $\lambda_p(\mb{\O,J_1,q}+a)\le\lambda_p(\mb{\O,J_2,q}+a)$.
\end{itemize}
\end{proposition}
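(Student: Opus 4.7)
All five assertions follow directly from the sup-characterisation~\eqref{ch-pev-def-lp} of $\lambda_p$: one takes a test function $\varphi$ that is admissible for some parameter choice and verifies that it remains admissible (possibly with a shifted value of $\lambda$) after modifying the data. Items~(i), (ii) and~(v) are obtained by this substitution. Indeed, if $\varphi\in C^1(\O)\cap C(\overline\O)$ is positive and satisfies $\opmb{\varphi}{\O,J,q}+a\varphi+\lambda\varphi\le 0$ in~$\O$, then (i) its restriction to $\O'\subset\O$ is still admissible on $\O'$, because $\int_{\O'}J(x,y)\varphi(y)\,dy\le\int_{\O}J(x,y)\varphi(y)\,dy$ by nonnegativity of $J$ and $\varphi$; (ii) replacing $a=a_1$ by a smaller $a_2\le a_1$ only decreases the left-hand side since $\varphi>0$; (v) similarly, replacing $J=J_1$ by $J_2\le J_1$ only decreases the nonlocal integral. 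Passing to the supremum over admissible $\lambda$ then yields the three monotonicity statements.

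\textbf{Item~(iii).} I write $\mb{\O,J,q}+a=\mb{\O,J,q}+b+(a-b)$ and use that $(a-b)\varphi\le\|a-b\|_{L^\infty(\O)}\varphi$ pointwise, since $\varphi>0$. If $\varphi$ is admissible for $\mb{\O,J,q}+b$ with value $\lambda$, then
$$\opmb{\varphi}{\O,J,q}+a\varphi+\bigl(\lambda-\|a-b\|_{L^\infty(\O)}\bigr)\varphi\le\opmb{\varphi}{\O,J,q}+b\varphi+\lambda\varphi\le0,$$
so $\lambda-\|a-b\|_{L^\infty(\O)}$ is admissible for $\mb{\O,J,q}+a$. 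Hence $\lambda_p(\mb{\O,J,q}+a)\ge\lambda_p(\mb{\O,J,q}+b)-\|a-b\|_{L^\infty(\O)}$, and the symmetric argument (swapping the roles of $a$ and $b$) completes the proof.

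\textbf{Item~(iv).} The explicit lower bound is obtained by plugging $\varphi\equiv1\in C^1(\O)\cap C(\overline\O)$ into~\eqref{ch-pev-def-lp}: the resulting inequality reads $\int_{\O}J(\cdot,y)\,dy+a+\lambda\le0$ in $\O$ and holds for every $\lambda\le-\sup_{x\in\O}\bigl(a(x)+\int_{\O}J(x,y)\,dy\bigr)$. The right-hand side is finite because $a$ is bounded by~\eqref{assum3} and $\int_{\O}J(x,y)\,dy\le 2\kappa_1\delta_1$ by~\eqref{assum2}. To establish $\lambda_p<+\infty$ I choose any bounded open subinterval $\O'\subset\O$ with $\overline{\O'}\subset\O$; then $q\in C(\overline{\O'})$ does not vanish on this compact set, so $\inf_{\O'}|q|>0$ (thus~\eqref{assum4} holds on $\O'$), and $q$ has constant sign on $\O$ by connectedness. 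Depending on this sign, either Proposition~\ref{ch-thm1} (if $q>0$) or the corresponding statement from~\cite{Coville2017a} (if $q<0$), combined with Theorem~\ref{ch-thm2}, yields $\lambda_p(\mb{\O',J,q}+a)\in\R$. Item~(i) then forces $\lambda_p(\mb{\O,J,q}+a)\le\lambda_p(\mb{\O',J,q}+a)<+\infty$. The main obstacle is precisely this upper bound in~(iv): no elementary test function seems to force $\lambda_p$ to be finite, so one must invoke the existence of a principal eigenpair on a bounded subdomain, which is where Proposition~\ref{ch-thm1} and Theorem~\ref{ch-thm2} come into play.
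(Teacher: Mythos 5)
Your proposal is correct and follows exactly the paper's route: items (i)–(iii) and (v) by testing admissible functions in the sup-definition \eqref{ch-pev-def-lp}, the lower bound in (iv) via $\varphi\equiv1$, and the finiteness of $\lambda_p$ via monotonicity in the domain combined with the existence of a principal eigenpair on a bounded subdomain (Proposition~\ref{ch-thm1}, Theorems~\ref{ch-thm2} and~\ref{ch-thm-clw}). If anything, you are slightly more explicit than the paper in noting that one must take $\O'$ compactly contained in $\O$ so that \eqref{assum4} holds there.
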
  
 
For the interested reader, we refer to \cite{Berestycki2016b,Coville2010,Coville2015} for the proofs of (i)-(iv) in the case $q\equiv 0$. 

While property (i) in Proposition~\ref{ch-prop1} is concerned with the monotonicity of the quantity $\lambda_p(\mb{\O,J,q}\!+\!a)$ with respect to the domain $\O$, the last main result deals with a continuity property of $\lambda_p(\mb{\O,J,q}\!+\!a)$ with respect to the domain $\Omega$.

\begin{theorem}\label{ch-thm3}
Assume that $\O\subset \R$ is a non-empty bounded or unbounded domain and let $J$, $q$  and $a$ satisfy assumptions \eqref{assum1}-\eqref{assum4}. Then, for any sequence $(\O_n)_{n\in\N}$ of non-empty bounded domains such that 
\begin{equation}\label{Omegan}
\O_n\subset\O\hbox{ and }\overline{\O_n} \subset \O_{n+1}\hbox{ for all }n\in\N,\ \text{ and }\bigcup_{n\in\N}\O_n =\O,
\end{equation}
one has
$$\lambda_p(\mb{\O,J,q}+a)=\lim_{n\to+\infty}\lambda_p(\mb{\O_n,J,q}+a).$$
Moreover the principal eigenvalue $\lambda_p(\mb{\O,J,q}+a)$ is always achieved, that is, there is a function $\varphi\in C^1(\Omega)\cap C(\overline{\O})$ such that~\eqref{eq} holds with $\lambda=\lambda_p(\mb{\O,J,q}+a)$ and $\varphi>0$ in $\O$.
\end{theorem}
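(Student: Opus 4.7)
The plan is to combine the monotonicity of $\lambda_p$ with respect to the domain, the existence of a positive principal eigenfunction on each bounded subdomain $\O_n$ supplied by Theorem~\ref{ch-thm2}, a Harnack-type estimate, and an Arzel\`a--Ascoli compactness argument to produce an eigenfunction on $\O$.

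First, by Proposition~\ref{ch-prop1}(i), the sequence $\lambda_n:=\lambda_p(\mb{\O_n,J,q}+a)$ is non-increasing and bounded below by $\lambda_p(\mb{\O,J,q}+a)$, itself finite by Proposition~\ref{ch-prop1}(iv). Hence $\lambda_n\to\lambda_\infty\ge\lambda_p(\mb{\O,J,q}+a)$; the substantive content is the reverse inequality, which will follow once I exhibit a positive $C^1(\O)\cap C(\overline{\O})$ eigenfunction of $\mb{\O,J,q}+a$ for $\lambda_\infty$.

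For each $n$, Theorem~\ref{ch-thm2} (or Proposition~\ref{ch-thm1} in the case $q>0$) furnishes a positive $\varphi_n\in C^1(\O_n)\cap C(\overline{\O_n})$, vanishing at the endpoint of $\O_n$ dictated by the sign of $q$, and satisfying $\opmb{\varphi_n}{\O_n,J,q}+a\varphi_n+\lambda_n\varphi_n=0$ in $\O_n$. I fix $x_0\in\O_0$ and normalise by $\varphi_n(x_0)=1$. The Harnack-type inequality announced in the abstract then provides, for every compact $K\subset\O$, a constant $C_K>0$ independent of $n$ with $C_K^{-1}\le\varphi_n\le C_K$ on $K$ once $K\subset\O_n$. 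Solving the equation for $\varphi_n'$ (which is possible by~\eqref{assum4}) and using the local support of $J(x,\cdot)$ from~\eqref{assum2} together with the uniform pointwise bounds on $\varphi_n$, one also gets a uniform bound on $\varphi_n'$ on compact subsets of $\O$.

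A diagonal extraction together with Arzel\`a--Ascoli yields a subsequence converging locally uniformly in $\O$ to some $\varphi\in C(\O)$ with $\varphi(x_0)=1$ and, by the Harnack bound, $\varphi>0$ throughout $\O$. The compact support of $J(x,\cdot)$ lets one pass to the limit in the nonlocal integral term by dominated convergence, giving $\varphi\in C^1(\O)$ and $\opmb{\varphi}{\O,J,q}+a\varphi+\lambda_\infty\varphi=0$ in $\O$. If $\O$ has a finite endpoint, integrating the ODE from $x_0$ together with the uniform local bounds propagates continuity of $\varphi$ up to that endpoint, so $\varphi\in C^1(\O)\cap C(\overline{\O})$. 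Using $\varphi$ as a test function in the definition~\eqref{ch-pev-def-lp} of $\lambda_p(\mb{\O,J,q}+a)$ then gives $\lambda_p(\mb{\O,J,q}+a)\ge\lambda_\infty$, completing the chain of inequalities and showing that $\lambda_p(\mb{\O,J,q}+a)$ is achieved by $\varphi$.

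The main obstacle is precisely the uniform Harnack-type estimate on compact subsets. Without it the normalised eigenfunctions $\varphi_n$ could degenerate or blow up in the interior of $\O$ as $n\to+\infty$, and the whole compactness scheme would collapse. This is the new technical ingredient highlighted in the abstract; its derivation in the first-order setting is delicate because no second-order smoothing is available and the positivity bounds must be obtained solely from the drift structure and the uniform lower bound on $J$ near the diagonal provided by~\eqref{assum2}. A secondary point is the continuity of $\varphi$ up to a finite endpoint of $\O$ when one exists; this is handled by the ODE integrated form, exploiting~\eqref{assum4} and the local $L^\infty$ control of the nonlocal term near the endpoint.
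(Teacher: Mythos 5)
Your overall architecture is the paper's: monotonicity of $\lambda_p$ in the domain gives a limit $\lambda_\infty\ge\lambda_p(\mb{\O,J,q}+a)$, eigenfunctions $\varphi_n$ on the $\O_n$ are normalized, the Harnack estimate plus the equation give local $C^1$ compactness, and Arzel\`a--Ascoli produces a positive eigenfunction for $\lambda_\infty$, closing the loop via the definition~\eqref{ch-pev-def-lp}. In the case $\O=\R$ your argument coincides with the paper's almost verbatim (same normalization $\varphi_n(x_0)=1$, same use of Lemma~\ref{ch-lem-harnack} with Remark~\ref{remharnack}, absorbing the bounded sequence $\lambda_n$ into the zeroth-order coefficient).

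There is, however, a genuine gap when $\O$ has a finite endpoint, say $\O=(\alpha,+\infty)$ or $\O$ bounded. Your key claim is that for \emph{every} compact $K\subset\O$ one has $C_K^{-1}\le\varphi_n\le C_K$ on $K$ uniformly in $n$. Lemma~\ref{ch-lem-harnack} requires $[r_1-\delta_1,r_2+\delta_1]\subset\O_n$, so it says nothing about compacts $K=[c,d]$ with $c-\delta_1<\alpha$; for such $K$ the hypothesis fails for all $n$, and with your normalization $\varphi_n(x_0)=1$ at an interior point you obtain no upper bound on $\varphi_n$ in the $\delta_1$-neighborhood of the endpoint. This is not cosmetic: without uniform bounds there you get neither equicontinuity of $(\varphi_n)$ near $\alpha$ nor boundedness of the limit $\varphi$ near $\alpha$, and hence no extension of $\varphi$ to $C(\overline{\O})$ --- which you need, since the test functions in~\eqref{ch-pev-def-lp} must lie in $C^1(\O)\cap C(\overline{\O})$, so even the final inequality $\lambda_p(\mb{\O,J,q}+a)\ge\lambda_\infty$ depends on it. The paper avoids this by adapting the normalization to the geometry: for bounded $\O$ it sets $\max_{\overline{\O_n}}\varphi_n=1$ and needs no Harnack at all (the equation plus~\eqref{assum4} then bounds $\varphi_n'$ directly); for $\O=(\alpha,+\infty)$ it sets $\max_{(\alpha,\alpha+4\delta_1]\cap\overline{\O_n}}\varphi_n=1$, observes that for $x\in(\alpha,\alpha+3\delta_1)$ the nonlocal term only samples $\varphi_n$ on $(x-\delta_1,x+\delta_1)\subset(\alpha-\delta_1,\alpha+4\delta_1)$ so that $\varphi_n'$ is uniformly bounded near the endpoint by the normalization alone, and invokes Harnack only on compacts $[\alpha+2\delta_1,\beta+\delta_1]$ anchored at the point $\alpha+3\delta_1$ where the normalization controls $\varphi_n$. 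To repair your proof you would either have to adopt these endpoint-adapted normalizations, or supply a separate Gronwall-type argument propagating bounds from $\{x_0\}$ into the $\delta_1$-collar of the endpoint; as written, the step is asserted but not justified by the Harnack lemma you cite.
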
 

As an immediate corollary, the following inequality holds.

\begin{corollary}
Assume that $\O\subset \R$ is a non-empty bounded or unbounded domain and let $J$, $q$  and $a$ satisfy assumptions \eqref{assum1}-\eqref{assum4}. Then
$$\lambda_p(\mb{\O,J,q}+a)\ge\widetilde{\lambda}_p(\mb{\O,J,q}+a),$$
where
$$\widetilde{\lambda}_p(\mb{\O,J,q}+a):= \inf\big\{\lambda \in \R\ |\ \exists\,\varphi \in C^1(\O)\cap C(\overline{\O}),\ \varphi>0\text{ and }\opmb{\varphi}{\O,J,q}+a\varphi +\lambda\varphi \ge 0\text{ in }\O\big\}.$$
\end{corollary}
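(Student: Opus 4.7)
The plan is to use the existence statement in Theorem~\ref{ch-thm3} directly. By that theorem, under hypotheses~\eqref{assum1}--\eqref{assum4}, the principal eigenvalue $\lambda_p(\mb{\O,J,q}+a)$ is achieved by some $\varphi\in C^1(\O)\cap C(\overline{\O})$ with $\varphi>0$ in $\O$ and
$$\opmb{\varphi}{\O,J,q}+a\varphi+\lambda_p(\mb{\O,J,q}+a)\,\varphi=0\quad\text{in }\O.$$

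The key observation is that this $\varphi$ is an admissible test function for the definition of $\widetilde{\lambda}_p(\mb{\O,J,q}+a)$. Indeed, the set over which the infimum is taken consists of all $\lambda\in\R$ such that there exists a positive $C^1(\O)\cap C(\overline{\O})$ function satisfying $\opmb{\varphi}{\O,J,q}+a\varphi+\lambda\varphi\ge 0$ in $\O$. The eigenfunction $\varphi$ produced by Theorem~\ref{ch-thm3} has precisely the required regularity and positivity, and the equality $\opmb{\varphi}{\O,J,q}+a\varphi+\lambda_p(\mb{\O,J,q}+a)\varphi=0$ trivially implies the non-strict inequality $\ge 0$. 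Hence $\lambda_p(\mb{\O,J,q}+a)$ itself lies in the set whose infimum defines $\widetilde{\lambda}_p(\mb{\O,J,q}+a)$, which immediately yields
$$\widetilde{\lambda}_p(\mb{\O,J,q}+a)\le\lambda_p(\mb{\O,J,q}+a).$$

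There is no real obstacle here: the whole content of the corollary is the availability of a principal eigenfunction in the full (possibly unbounded) domain $\O$, which is exactly what Theorem~\ref{ch-thm3} supplies. The only minor point worth mentioning in the write-up is that one does not need the strict inequality in the definition of $\widetilde{\lambda}_p$, so an equation suffices, and that the regularity $\varphi\in C^1(\O)\cap C(\overline{\O})$ matches verbatim the regularity required of admissible test functions in the definition of $\widetilde{\lambda}_p$. No further approximation, truncation, or limiting argument is needed.
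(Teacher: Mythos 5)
Your argument is correct and is exactly the one the paper intends: the corollary is stated as an immediate consequence of Theorem~\ref{ch-thm3}, whose achieved principal eigenfunction serves as an admissible test function in the definition of $\widetilde{\lambda}_p(\mb{\O,J,q}+a)$, giving $\widetilde{\lambda}_p\le\lambda_p$. No difference in approach and no gaps.
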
 
    
These main results are proved in Section~\ref{ch-section-first} (Proposition~\ref{ch-thm1}) and Section~\ref{sec4} (Theorems~\ref{ch-thm2},~\ref{ch-thm3}, and Proposition~\ref{ch-prop1}). One of the main tools for the proof of Theorem~\ref{ch-thm3} is a new Harnack inequality for operators of the type $\mb{\O,J,q}+a$. This result of independent interest is stated and proved in Section~\ref{sec3}.

%%%%%%%%%%%%%%%%%%%%%%%%%%%%%%%%%%%%%%%%%%%%%%%%%
%%%%%%%%%%%%%%%%%%%%%%%%%%%%%%%%%%%%%%%%%%%%%%%%%

\section{Some preliminaries: proof of Proposition~\ref{ch-thm1}}\label{ch-section-first}

In this section, we first recall from~\cite{Coville2017a} the properties of the quantity $\mu_1(\mb{\O,J,q}+a)$ defined in~\eqref{defmu1} and then look for the properties of $\widetilde \mu_1(\mb{\O,J,q}+a)$ defined in~\eqref{defmutilde1}. From \cite{Coville2017a}, we know that if $\O=(r_1,r_2)\subset\R$ is a non-empty bounded domain and if $q<0$ in $\O$, then $\mu_1(\mb{\O,J,q}+a)$ is truly an eigenvalue associated to a positive eigenfunction $\varphi_1$:

\begin{theorem}\label{ch-thm-clw}{\rm{\cite[Theorem~4.1]{Coville2017a}}}
Assume that $\O=(r_1,r_2)\subset \R$ $($with $r_1<r_2$$)$ is a bounded domain and let $J$, $q$ and $a$ satisfy assumptions~\eqref{assum1}-\eqref{assum3}. Assume further that $q<0$. Then $\mu_1(\mb{\O,J,q}+a)$ defined in~\eqref{defmu1} is a real number and it is an eigenvalue of~$\mb{\O,J,q}+a$ associated to a positive eigenfunction $\varphi_1 \in C^1(\O)\cap C(\overline{\O})$ solving~\eqref{varphi1}. Moreover, the following variational characterization holds: 
$$\mu_1(\mb{\O,J,q}\!+\!a)=\sup_{u\in Y}\,\inf_{x\in \O}\left(\!-\frac{\opmb{u}{\O,J,q}(x)\!+\!a(x)u(x)}{u(x)}\!\right)=\inf_{u\in Y}\,\sup_{x\in \O}\left(\!-\frac{\opmb{u}{\O,J,q}(x)\!+\!a(x)u(x)}{u(x)}\!\right)$$
with $Y=\big\{u\in C^1(\O)\cap C(\overline{\Omega})\ |\ u>0\hbox{ in }\O\hbox{ and }u(r_1)=0 \big\}$.
\end{theorem}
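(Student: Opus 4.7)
This statement is cited verbatim from \cite[Theorem~4.1]{Coville2017a}, so the plan is to outline the argument one would use there. Since $q<0$ throughout the bounded interval $\O=(r_1,r_2)$, the characteristics of the local transport $q\partial_x$ flow from $r_2$ toward $r_1$, which is why $r_1$ is the natural outflow endpoint at which the Dirichlet condition from~\eqref{defmu1} should be imposed.

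The core step is to produce the eigenpair by a Krein--Rutman construction combined with a parameter-sweep in $\lambda$. For each $\lambda\in\R$, define $U_\lambda:C(\overline{\O})\to C(\overline{\O})$ by $U_\lambda[\varphi]=\psi$, where $\psi$ is the unique solution of the first-order linear ODE
$$q(x)\psi'(x)+(a(x)+\lambda)\psi(x)=-\int_\O J(x,y)\varphi(y)\,dy,\qquad \psi(r_1)=0,$$
obtained via the Duhamel formula with integrating factor $\exp\!\bigl(\int(a+\lambda)/(-q)\bigr)$ (meaningful because $q<0$ in $\O$). From the explicit formula, $U_\lambda$ is a compact positive operator on $C(\overline{\O})$ by Arzel\`a--Ascoli, and the uniform lower bound on $J$ in~\eqref{assum2} yields strong positivity on the cone $\{\varphi\in C(\overline{\O}):\varphi\ge0,\ \varphi(r_1)=0\}$. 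Krein--Rutman then gives a simple positive spectral radius $r(\lambda)>0$ with positive eigenfunction $\varphi_\lambda\in Y$. The same formula shows $\lambda\mapsto r(\lambda)$ is continuous and strictly increasing, with $r(\lambda)\to0$ as $\lambda\to-\infty$ and $r(\lambda)\to+\infty$ as $\lambda\to+\infty$, so there is a unique $\lambda^*\in\R$ with $r(\lambda^*)=1$. Unraveling $U_{\lambda^*}\varphi_{\lambda^*}=\varphi_{\lambda^*}$ then yields exactly~\eqref{varphi1} with eigenvalue $\lambda^*$.

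That $\lambda^*$ coincides with $\mu_1(\mb{\O,J,q}+a)$ requires a two-sided comparison. The inequality $\lambda^*\le\mu_1$ follows from admissibility of $\varphi_{\lambda^*}$ in the definition~\eqref{defmu1}. For the converse, given any $u\in Y$ with $\opmb{u}{\O,J,q}+au+\lambda'u\le0$ in $\O$, set $w=u/\varphi_{\lambda^*}$ and use the equation for $\varphi_{\lambda^*}$ to derive the identity
$$q\,\varphi_{\lambda^*}\,w'+\int_\O J(x,y)\bigl(w(y)-w(x)\bigr)\varphi_{\lambda^*}(y)\,dy\le(\lambda^*-\lambda')\,u.$$
A sliding argument on $w$ (scaling $u$ against $\varphi_{\lambda^*}$ up to first contact and exploiting the strict nonlocal positivity granted by~\eqref{assum2} at an interior touching point) forces $\lambda'\le\lambda^*$. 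Hence $\mu_1=\lambda^*$ is a real number achieved by $\varphi_1:=\varphi_{\lambda^*}$.

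The Collatz--Wielandt formulas follow as a byproduct: plugging $\varphi_1$ into both sides shows that the sup-inf and the inf-sup are each equal to $\mu_1$, the nontrivial inequality $\inf_{u\in Y}\sup_{x\in\O}\bigl(-(\opmb{u}{\O,J,q}+au)/u\bigr)\ge\mu_1$ being a rewording of the same ratio/sliding comparison. I expect the main obstacle to be the rigorous handling of the ratio $w=u/\varphi_1$ near the degenerate endpoint $r_1$ (where both functions vanish), together with the strong-positivity verification for $U_\lambda$ in the presence of the local drift term; these are precisely the points where~\eqref{assum2}, the sign of $q$, and the choice of outflow boundary at $r_1$ interact delicately.
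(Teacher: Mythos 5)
The paper itself offers no proof of this statement: Theorem~\ref{ch-thm-clw} is imported verbatim from \cite[Theorem~4.1]{Coville2017a} and used as a black box (the only related result proved here is its $q>0$ counterpart, Proposition~\ref{ch-thm1}, obtained from it by the reflection $x\mapsto r_1+r_2-x$). So your proposal cannot be compared with an internal argument of the paper, only judged on its own merits; the route you outline (invert the drift part with the Dirichlet condition at the outflow endpoint, apply Krein--Rutman to the resulting positive integral operator, sweep the parameter $\lambda$) is a plausible and standard one, but as written it has genuine gaps.

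Concretely: (1) the Krein--Rutman step is not correct as stated. In $X=\{\varphi\in C(\overline{\O})\,:\,\varphi(r_1)=0\}$ the nonnegative cone has empty interior, and since \eqref{assum2} only gives $J(x,y)\le\kappa_1\,\mathds{1}_{(x-\delta_1,x+\delta_1)}(y)$, the operator $U_\lambda$ is not positivity improving in one step (if $r_2-r_1>\delta_1$ and $\varphi\ge0$ is supported near $r_2$, then $U_\lambda[\varphi]$ vanishes identically near $r_1$). One must instead use the weak form of Krein--Rutman, prove $r(\lambda)>0$ separately from the lower bound $\kappa_0$ (with an iteration/irreducibility argument), and recover positivity of the eigenfunction a posteriori from the equation; none of this is indicated. (2) The theorem is stated under \eqref{assum1}--\eqref{assum3} only, so $q<0$ may tend to $0$ at $r_1$ and $r_2$. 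Your Duhamel formula then carries the singular weight $1/(-q)$ and an integrating factor whose exponent $\int_s^x(a+\lambda)/(-q)$ may blow up near $r_2$ for large $\lambda$, so the well-definedness and compactness of $U_\lambda$ on $C(\overline{\O})$, the limits $r(\lambda)\to0,+\infty$, the continuity and monotonicity of $r(\lambda)$, and the endpoint analysis in your ratio/sliding comparison all require justification that your sketch bypasses by implicitly treating $1/q$ as bounded; note that the analogous endpoint arguments in Theorem~\ref{ch-thm2} of the paper are carried out only under the extra hypothesis \eqref{assum4}. (3) The inf-sup half of the Collatz--Wielandt formula is not a byproduct of the supersolution comparison: it amounts to showing that $\opmb{u}{\O,J,q}+au+\lambda' u\ge0$ in $\O$ with $u\in Y$ forces $\lambda'\ge\mu_1$, a dual (subsolution) comparison needing its own touching-point analysis --- exactly the $\lambda_p'=\mu_1$ step that the paper proves separately, and again under \eqref{assum4}, in Theorem~\ref{ch-thm2}.
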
  

With Theorem~\ref{ch-thm-clw} in hand, let us show Proposition~\ref{ch-thm1}.

\begin{proof}[Proof of Proposition~$\ref{ch-thm1}$]
Let $\O=(r_1,r_2)\subset \R$ (with $r_1<r_2$) be a bounded domain and let $J$, $q$ and~$a$ satisfy assumptions~\eqref{assum1}-\eqref{assum3}, together with $q>0$ in $\O$. We have to show that the spectral problem~\eqref{eq}, namely
\begin{equation}\label{ch-eq-pev-qpos}
q(x)\varphi'(x)+\int_{\O}J(x,y)\varphi(y)\,dy + a(x)\varphi(x)+\lambda \varphi(x)=0 \quad \textrm{for }x \in \O,
\end{equation}
has a solution $\varphi\in C^1(\O)\cap C(\overline{\O})$ such that $\varphi>0$ in $\O$, with $\lambda=\widetilde \mu_1(\mb{\O,J,q}+a)$.

To do so, let us first consider the following functions:
$$\widetilde q(x):=-q(r_1+r_2-x), \quad \widetilde{a}(x):=a(r_1+r_2-x) \quad \text{and} \quad  \widetilde J(x,y):=J(r_1+r_2-x,r_1+r_2 -y)$$
defined for $x,y\in\O$. Note that by construction, $J$, $\widetilde q$ and $\widetilde a$ are well defined and still satisfy \eqref{assum1}-\eqref{assum3}. 

Let now any $\lambda\in\R$ and $\varphi\in C^1(\O)\cap C(\overline{\O})$ be such that $\varphi>0$ in $\O$, $\varphi(r_1)=0$ and
$$\opmb{\varphi}{\O,\widetilde{J},\widetilde{q}}+\widetilde{a}\,\varphi+\lambda\varphi\le0\ \hbox{ in }\O.$$
A straightforward computation then shows that the function $\psi\in C^1(\O)\cap C(\overline{\O})$ defined by $\psi(x):=\varphi(r_1+r_2-x)$ satisfies
$$\opmb{\psi}{\O,J,q}(x)+a(x)\psi(x)=\opmb{\varphi}{\O,\widetilde J,\widetilde q}(r_1+r_2 -x)+\widetilde a(r_1+r_2 -x)\varphi(r_1+r_2-x)\le-\lambda\psi(x)$$
for all $x\in\Omega$. Since $\psi>0$ in $\Omega$ and $\psi(r_2)=\varphi(r_1)=0$, the definition of $\widetilde \mu_1(\mb{\O,J,q}+a)$ immediately yields $\lambda\le\widetilde \mu_1(\mb{\O,J,q}\!+\!a)$, hence $\mu_1(\mb{\O,\widetilde J,\widetilde q}\!+\!\widetilde a)\le \widetilde \mu_1(\mb{\O,J,q}\!+\!a)$ by taking the supremum over $\lambda$ in the definition of $\mu_1(\mb{\O,\widetilde J,\widetilde q}\!+\!\widetilde a)$.

A similar argument yields the opposite inequality, hence 
$$\mu_1(\mb{\O,\widetilde J,\widetilde q}\!+\!\widetilde a)=\widetilde \mu_1(\mb{\O,J,q}\!+\!a).$$

Moreover, since $\widetilde{q}<0$ in $\O$, Theorem~\ref{ch-thm-clw} implies that $\mu_1(\mb{\O,\widetilde{J},\widetilde q}+\widetilde a)$ is a real number and it yields the existence of a solution $\varphi_1 \in C^1(\O)\cap C(\overline{\O})$ to
$$\left\{\begin{array}{rcl}
\opmb{\varphi_1}{\O,\widetilde{J},\widetilde{q}}+\widetilde{a}\,\varphi_1 +\mu_1(\mb{\O,\widetilde{J},\widetilde{q}}\!+\!\widetilde{a})\,\varphi_1 & = & 0\ \text{ in }\O=(r_1,r_2),\vspace{3pt}\\
\varphi_1 & > & 0\ \hbox{ in }\O=(r_1,r_2),\vspace{3pt}\\
\varphi_1(r_1) & = & 0.\end{array}\right.$$
As above, the function $\psi_1\in C^1(\O)\cap C(\overline{\O})$ defined by $\psi_1(x):=\varphi_1(r_1+r_2-x)$ satisfies
$$\begin{array}{rcl}
\opmb{\psi_1}{\O,J,q}(x)+a(x)\psi_1(x) & = & \opmb{\varphi_1}{\O,\widetilde J,\widetilde q}(r_1+r_2 -x)+\widetilde a(r_1+r_2 -x)\varphi_1(r_1+r_2-x)\vspace{3pt}\\
& = & -\mu_1(\mb{\O,\widetilde J,\widetilde q}\!+\!\widetilde a)\psi_1(x)\end{array}$$
for all $x\in\Omega$. Thus $(\mu_1(\mb{\O,\widetilde J,\widetilde q}\!+\!\widetilde a),\psi_1)$ is an eigenpair for problem~\eqref{ch-eq-pev-qpos}. Furthermore, since $\psi_1>0$ in $\Omega$ and $\psi_1(r_2)=\varphi(r_1)=0$, and since $\mu_1(\mb{\O,\widetilde J,\widetilde q}\!+\!\widetilde a)=\widetilde \mu_1(\mb{\O,J,q}\!+\!a)$, the proof of Proposition~\ref{ch-thm1} is thereby complete.
\end{proof}

 \section{A Harnack type a priori estimate}\label{sec3}

In this section we prove some useful Harnack type a priori inequalities on positive solutions of the linear problem: 
\begin{equation}\label{ch-eq-lin}
q(x)u'(x)+ \oplb{u}{\O,J}(x) +a(x)u(x)=0\ \text{ for }x\in \O,
\end{equation}
where
$$\oplb{u}{\O,J}(x)=\int_{\O}J(x,y)u(y) \,dy$$
is as in~\eqref{defLOmega}.
  
\begin{lemma}\label{ch-lem-harnack}
Let $\O\subset\R$ be a non-empty domain of $\R$, let $J$ satisfy~\eqref{assum1}-\eqref{assum2} with some positive constants $0\le\kappa_0\le\kappa_1$ and $0<\delta_0\le\delta_1$, and let $q$ and $a$ satisfy~\eqref{assum3}. Assume that $q$ is positive and there are $r_1\le r_2$ in $\O$ such that
$$[r_1-\delta_1,r_2+\delta_1]\subset\O.$$
Then there exists a positive constant $C$ depending only on $|r_1-r_2|$, ${\rm{dist}}([r_1,r_2],\partial\O)$,\footnote{If $\Omega=(a,b)$ with $-\infty\le a<b\le+\infty$, then ${\rm{dist}}([r_1,r_2],\partial\O):=\min(r_1-a,b-r_2)$ denotes the distance between $[r_1,r_2]$ and $\partial\O$.} $J$, $q$ and $a$ such that, for every nonnegative $C^1(\O)$ solution of~\eqref{ch-eq-lin}, we have 
\begin{equation}\label{harnack}
\max_{[r_1,r_2]}u\le C\min_{[r_1,r_2]}u.
\end{equation}
\end{lemma}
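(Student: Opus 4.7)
The plan is to introduce the integrating factor $G(x) := \exp\bigl(\int_{r_1}^x a(s)/q(s)\,ds\bigr)$ in order to recast~\eqref{ch-eq-lin} as a monotonicity statement, and then combine the resulting forward estimate with the two-sided bound on $J$ in~\eqref{assum2} to obtain~\eqref{harnack}. Throughout, set $K := [r_1-\delta_1, r_2+\delta_1]$, $q_{\min} := \inf_K q > 0$ (positive since $q$ is continuous and strictly positive on the compact set $K \subset \O$), and $A := \|a\|_{L^\infty(\O)}$.

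\emph{Forward direction.} With $V := uG$, equation~\eqref{ch-eq-lin} gives
$$V'(x) = \bigl(u'(x) + a(x)u(x)/q(x)\bigr)G(x) = -\oplb{u}{\O,J}(x)\,G(x)/q(x) \le 0,$$
since $u,J \ge 0$ force $\oplb{u}{\O,J} \ge 0$. Hence $V$ is nonincreasing on $K$; for $x_1 \le x_2$ in $[r_1, r_2]$ this gives $u(x_2) \le u(x_1)G(x_1)/G(x_2) \le C_1 u(x_1)$ with $C_1 := e^{2A|r_1-r_2|/q_{\min}}$. In particular, $\max_{[r_1, r_2]} u \le C_1 u(r_1)$ and $\min_{[r_1, r_2]} u \ge u(r_2)/C_1$, so once~\eqref{harnack} is established with $\min$ replaced by $u(r_2)$ and $\max$ by $u(r_1)$ the claim follows.

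\emph{Degenerate case.} If $u(z_0) = 0$ for some $z_0 \in \O$, the $C^1$ interior-minimum principle forces $u'(z_0) = 0$, so~\eqref{ch-eq-lin} at $z_0$ reduces to $\oplb{u}{\O,J}(z_0) = 0$, and the lower bound in~\eqref{assum2} yields $u \equiv 0$ on $(z_0-\delta_0, z_0+\delta_0) \cap \O$. Iterating at the boundary of the zero set propagates the vanishing throughout the connected open interval $\O$, making~\eqref{harnack} trivial. We may therefore assume $u > 0$ on $\O$ from here on.

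\emph{Backward direction.} The remaining task is to show $u(r_1) \le C_2 u(r_2)$. Integrating $V' = -\oplb{u}{\O,J}G/q$ over $[r_1, r_2]$ and using $V(r_2) \ge 0$ yields the integral control $\int_{r_1}^{r_2} \oplb{u}{\O,J}(t)\,dt \le C_3 u(r_1)$. The plan is then to localize: on any sub-interval $[s_1, s_2] \subset [r_1, r_2]$ of small length $\eta$, the same identity combined with the pointwise upper bound $\oplb{u}{\O,J}(x) \le \kappa_1 \int_{x-\delta_1}^{x+\delta_1} u$ and Fubini produces an estimate of the form $u(s_1) \le (1+\gamma\eta)\,u(s_2) + \gamma\eta \sup_{[s_1-\delta_1, s_2+\delta_1]} u$, with $\gamma$ depending only on $q, a, J$. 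Partitioning $[r_1, r_2]$ into $N$ pieces of length $\eta$ and chaining these inequalities gives $u(r_1) \le C\,u(r_2) + C' \sup_K u$. The $\sup_K u$ is then absorbed by iterating the same procedure on slightly larger intervals nested inside $\O$ — the room provided by $\text{dist}([r_1,r_2],\partial\O) \ge \delta_1$ ensures finitely many iterations suffice — yielding $\sup_K u \le C_2'' u(r_2)$, hence $u(r_1) \le C_2 u(r_2)$. Combining with the forward direction delivers $\max u \le C_1 u(r_1) \le C_1 C_2 u(r_2) \le C_1^2 C_2 \min u$, which is~\eqref{harnack}.

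The principal obstacle is precisely the backward step. The nonlocal term samples $u$ on $[x-\delta_1, x+\delta_1]$, which may extend past $r_1$; while forward monotonicity of $V$ controls $u$ on $[r_2, r_2+\delta_1]$ by $u(r_2)$ up to a constant, there is no analogous direct bound on $u$ over $[r_1-\delta_1, r_1]$ in terms of $u(r_1)$. The dependence of $C$ on $\text{dist}([r_1, r_2], \partial\O)$ in the conclusion reflects exactly the cost of the iterative leftward propagation needed to absorb this ``left extension'' into a bound depending only on the coefficients and the geometry.
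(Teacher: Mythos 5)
Your forward half is sound and coincides with the paper's own first step: your $V=uG$ is exactly the auxiliary function $\widetilde w$ used in the paper, its monotonicity gives $\max_{[r_1,r_2]}u\le C_1u(r_1)$ and $\min_{[r_1,r_2]}u\ge u(r_2)/C_1$, and your treatment of the degenerate case $u(z_0)=0$ is the same. The genuine gap is in the backward step, and it is not a technicality: the scheme you sketch cannot close. Your local estimate $u(s_1)\le(1+\gamma\eta)\,u(s_2)+\gamma\eta\sup_{[s_1-\delta_1,s_2+\delta_1]}u$ bounds $u(s_1)$ by a quantity that already contains $\sup u$ over points to the \emph{left} of $s_1$; after chaining, the coefficient of $\sup_K u$ is of order $\gamma(r_2-r_1)e^{\gamma(r_2-r_1)}$, not small, so nothing is absorbed. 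Worse, ``iterating the same procedure on slightly larger intervals'' regenerates at each stage a supremum over an interval extended by a further $\delta_1$ to the left; the hypothesis only guarantees $[r_1-\delta_1,r_2+\delta_1]\subset\O$ with possibly arbitrarily small excess room, so you leave $\O$ after essentially one step, and even with infinite room the recursion never terminates because every bound refers to a strictly larger set. In short, all of your inequalities propagate information from left to right, whereas the whole difficulty --- which you correctly identify in your last paragraph --- is to bound $u$ on $[r_1-\delta_1,r_1]$ from above by $u(r_2)$, i.e.\ to propagate information from right to left.

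The missing ingredient is the reverse pointwise estimate furnished by the \emph{lower} bound in~\eqref{assum2}: integrating $V'=-\oplb{u}{\O,J}G/q$ over a short interval $[x_1,x_1+\delta]$ with $\delta\le\min(\delta_0/2,\varepsilon)$ chosen small in terms of the available room, discarding the nonnegative term $V(x_1+\delta)$, and using $J(x,\cdot)\ge\kappa_0\mathds{1}_{(x-\delta_0,x+\delta_0)\cap\O}$ yields $u(x_1)\ge c\int_{x_1-\delta}^{x_1+\delta}u$ --- a pointwise value dominating a local integral that extends to the \emph{left} of $x_1$. (You in fact derive the integrated version of this identity, $\int_{r_1}^{r_2}\oplb{u}{\O,J}\le C_3u(r_1)$, but then abandon it.) The paper runs this local inequality through a finite chaining argument (its claim~\eqref{ch-cla-harnack}) showing that all local integrals $\int_{z-\delta/4}^{z+\delta/4}u$ for $z\in[r_1-\delta_1,r_2+\delta_1]$ are mutually comparable, with steps of size $\delta/4$ that never leave the admissible neighbourhood; it concludes that $u(x)\ge C_2\int_{r_1-\delta_1}^{r_2+\delta_1}u$ for every $x\in[r_1,r_2]$, and pairs this with the upper bound $w(x_0)\le C_1\int_{r_1-\delta_1}^{r_2+\delta_1}w$ obtained at an interior maximum $x_0$ of the second auxiliary function $w=e^{x-r_1}\,\widetilde w$ (where $w'(x_0)\le0$), the case where the maximum sits at $r_2$ being handled by your monotonicity argument. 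Some version of this point-dominates-local-integral step, and of the bounded chaining, must be supplied before your backward direction is a proof.
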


\begin{proof}
Let us first fix $\eps>0$ so that
\begin{equation}\label{defeps}
[r_1-\delta_1-\eps,r_2+\delta_1+\eps]\subset\O
\end{equation}
and denote
\begin{equation}\label{defddelta}\left\{\begin{array}{l}
d=\min\big(1,{\rm{dist}}([r_1-\delta_1-\eps,r_2+\delta_1+\eps],\partial\O)\big)>0,\vspace{3pt}\\
\displaystyle\delta=\min\Big(\frac{\delta_0}{2},\frac{d}{2},\eps\Big)>0.\end{array}\right.
\end{equation}
Notice that $\delta$ depends only on $J$, $d$ and $\eps$, and therefore only on $J$ and ${\rm{dist}}([r_1,r_2],\partial\O)$. 

Let then $u$ be any nonnegative $C^1(\O)$ solution of~\eqref{ch-eq-lin}. One can assume without loss of generality that $u$ is not identically equal to $0$ in $\O$. Thanks to~\eqref{assum2} and the equation~\eqref{ch-eq-lin}, the closed set of zeroes of $u$ is also open. Hence, $u>0$ in $\O$.

Denote $w$, $\widetilde w$, $K$ and $\widetilde{K}$ the functions defined by
$$\left\{\begin{array}{lll}
w(x):=e^{\int_{r_1}^x(a(s)/q(s)+1)\,ds}u(x), & \widetilde w(x):=e^{\int_{r_1}^x(a(s)/q(s))\,ds}u(x)=e^{-x+r_1}w(x), & \hbox{for }x\in\O,\vspace{3pt}\\
\displaystyle K(x,y):=\frac{J(x,y)}{q(x)}e^{\int_{y}^x(a(s)/q(s)+1)\,ds}, & \displaystyle\widetilde K(x,y) :=\frac{J(x,y)}{q(x)}e^{\int_{y}^x(a(s)/q(s))\,ds}, & \hbox{for }(x,y)\in\O\times\O.\end{array}\right.$$
From \eqref{ch-eq-lin} and~\eqref{assum3}, the functions $w$ and $\widetilde{w}$ are positive in $\O$, of class $C^1(\O)$, and satisfy
\begin{align}
& w'(x) +\int_{\O}K(x,y)w(y)\,dy -w(x) =0,\label{ch-eq-lin2}\\
& \widetilde{w}'(x)+\int_{\O}\widetilde K(x,y)\widetilde w(y)\,dy =0,\label{ch-eq-lin3}
\end{align} 
for all $x\in\O$. Notice also that from~\eqref{assum2}-\eqref{assum3} the kernels $K$ and $\widetilde{K}$ are nonnegative in $\O\times\O$ and satisfy a local non-degeneracy condition of the type~\eqref{assum2}. More precisely, for $\widetilde{K}$, there exists a positive real number~$\widetilde \kappa_0$ (only depending on ${\rm{dist}}([r_1,r_2],\partial\O)$, $J$, $q$ and $a$) such that  
\begin{equation}\label{ch-eq-k-nondege}
\inf_{x\in[r_1-\delta_1-\eps,r_2+\delta_1+\eps+\delta] } \left(\inf_{y\in (x-\delta_0, x+\delta_0)}\widetilde K(x,y)\right)> \widetilde\kappa_0>0,
\end{equation} 
where $\eps>0$ and $\delta>0$ are as in~\eqref{defeps} and~\eqref{defddelta}.

Observe now that if we can  prove a Harnack type estimate for the positive solution $w$ of \eqref{ch-eq-lin2}, we then get a Harnack type estimate for the function $u$. Indeed, assume that the conclusion of Lemma \ref{ch-lem-harnack} holds true for the function $w$, that is, there exists a positive constant $C_0$ depending only on $|r_1-r_2|$, ${\rm{dist}}([r_1,r_2],\partial\O)$, $J$, $q$ and $a$, but not on $w$, such that
\begin{equation}\label{harnackbis}
\max_{[r_1,r_2]}w\le C_0\min_{[r_1,r_2]}w.
\end{equation}
Then, owing to the definition  of $w$, we get
$$u(x_1)=e^{-\int_{r_1}^{x_1}(a(s)/q(s)+1)\,ds}w(x_1)\le C_0\,e^{-\int_{r_1}^{x_1}(a(s)/q(s)+1)\,ds} w(x_2) = C_0\,e^{\int_{x_1}^{x_2}(a(s)/q(s)+1)\,ds}u(x_2) \le C u(x_2)$$
for all $x_1,x_2\in[r_1,r_2]$, with $C:=C_0\,e^{(r_2-r_1)(1+\|a/q\|_{L^\infty(r_1,r_2)})}>0$ depending only on $|r_1-r_2|$, ${\rm{dist}}([r_1,r_2],\partial\O)$, $J$, $q$ and $a$.

So let us prove the Harnack inequality for the positive $C^1(\O)$ solution $w$ of~\eqref{ch-eq-lin2}. Let $x_0$ be a point where $w$ achieves its maximum in $[r_1,r_2]$. Then either $x_0=r_2$, or $x_0\in [r_1,r_2)$ and $w'(x_0)\le 0$. We shall deal with these two cases separately.

{\it Case 1: $x_0=r_2$}. Observe from~\eqref{ch-eq-lin3} and the non-negativity of $\widetilde{K}$ that the function $\widetilde w$ is non-increasing in $\O$, hence $\widetilde w(x_0)=\widetilde w(r_2)\le \widetilde w(x)$ for all $x\in [r_1,r_2]$. Therefore, by using the definition of $\widetilde w$, we  get that, for all $x\in [r_1,r_2]$,
$$ w(x_0)=w(r_2)= \widetilde w(r_2)\,e^{r_2-r_1}\le  \widetilde w(x)\,e^{r_2-r_1}=w(x)\,e^{r_2-x}.$$
We then derive the following desired Harnack type inequality: 
$$\forall\,x_1,x_2\in[r_1,r_2],\ \ w(x_1)\le w(x_0)=w(r_2)\le e^{r_2-x_2}w(x_2)\le e^{r_2-r_1}w(x_2). $$

{\it Case 2: $x_0<r_2$}. Firstly, it follows from~\eqref{assum2} and~\eqref{ch-eq-lin2} that
$$w'(x) +\int_{r_1-\delta_1}^{r_2+\delta_1}K(x,y)w(y)\,dy =w(x)\ \hbox{ for all }x\in[r_1,r_2].$$
Therefore, at $x_0\in[r_1,r_2]$, since $w'(x_0)\le 0$, one infers that 
\begin{equation}
w(x_0)\le \int_{r_1-\delta_1}^{r_2+\delta_1}K(x_0,y)w(y)\,dy \le C_1 \int_{r_1-\delta_1}^{r_2+\delta_1}w(y)dy, \label{ch-eq-hsup} 
\end{equation}
with $C_1:=\|K\|_{L^\infty((r_1,r_2)\times(r_1-\delta_1,r_2+\delta_1))}\le\kappa_1\|1/q\|_{L^\infty(r_1,r_2)}e^{\delta_1\|1+a/q\|_{L^\infty(r_1-\delta_1,r_2+\delta_1)}}$ (note that $C_1>0$ depends only on $J$, $q$ and $a$).

Secondly, for any point $x_1 \in[r_1-\delta_1-\eps,r_2+\delta_1+\eps]$, by integrating  \eqref{ch-eq-lin3} over $[x_1,x_1+\delta]\subset \O$ and using the positivity of $\widetilde w$, the definition~\eqref{defddelta} and the non-degeneracy condition \eqref{ch-eq-k-nondege}, it follows that
\begin{align*}
\widetilde w(x_1)=\widetilde w(x_1+\delta) +\int_{x_1}^{x_1+\delta}\int_{\O}\widetilde K(x,y)\widetilde w(y)\,dy\,dx &\ge\int_{\O}\widetilde w(y)\left(\int_{x_1}^{x_1+\delta}\widetilde K(x,y)\,dx\right)dy\\
&\ge\int_{x_1-\delta}^{x_1+\delta}\widetilde w(y)\left(\int_{x_1}^{x_1+\delta}\widetilde K(x,y)\,dx\right)dy\\
&\ge\widetilde\kappa_0\,\delta\int_{x_1-\delta}^{x_1+\delta}\widetilde w(y)\,dy.
\end{align*}
Using the definition of $\widetilde w$,  we therefore get
\begin{equation}\label{ch-eq-h2}
w(x_1)\ge \widetilde\kappa_0\,\delta\,e^{-\delta}\int_{x_1-\delta}^{x_1+\delta} w(y)\,dy\ \hbox{ for every }x_1 \in[r_1-\delta_1-\eps,r_2+\delta_1+\eps]. 
\end{equation}

Working now as in \cite[Lemmas 3.1 and 3.2]{Coville2012}, we are going to show the existence of a positive constant $C_2$ depending only on $|r_1-r_2|$, ${\rm{dist}}([r_1,r_2],\partial\O)$, $J$, $q$ and $a$, such that
\begin{equation}
w(x)\ge C_2\int_{r_1-\delta_1}^{r_2+\delta_1} w(y)\,dy\ \hbox{ for all }x\in[r_1,r_2].\label{ch-eq-hinf}
\end{equation}

To do so, pick any point $x_1\in[r_1-\delta_1,r_2+\delta_1]$ and integrate~\eqref{ch-eq-h2} over $[x_1-\delta/4, x_1+\delta/4]\ (\subset\O)$. Thus,
\begin{align*}
\int_{x_1-\delta/4}^{x_1+\delta/4}w(s)\,ds&\ge \widetilde\kappa_0\delta e^{-\delta}\int_{x_1-\delta/4}^{x_1+\delta/4}\left(\int_{s-\delta}^{s+\delta} w(y)\,dy\right)\,ds\nonumber\\
&\ge \widetilde\kappa_0\delta e^{-\delta}\left(\int_{x_1-\delta/4}^{x_1-\delta/8}\left(\int_{s-\delta}^{s+\delta} w(y)\,dy\right)\,ds+\int_{x_1+\delta/8}^{x_1+\delta/4}\left(\int_{s-\delta}^{s+\delta} w(y)\,dy\right)\,ds\right)\nonumber\\
&\ge \widetilde\kappa_0\delta e^{-\delta}\left(\int_{x_1-\delta/4}^{x_1-\delta/8}\left(\int_{x_1-\delta/2}^{x_1} w(y)\,dy\right)\,ds+\int_{x_1+\delta/8}^{x_1+\delta/4}\left(\int_{x_1}^{x_1+\delta/2} w(y)\,dy\right)\,ds\right)\nonumber\\
&\ge \frac{\widetilde\kappa_0\delta^2 e^{-\delta}}{8}\left(\int_{x_1-\delta/2}^{x_1} w(y)\,dy+\int_{x_1}^{x_1+\delta/2} w(y)\,dy\right). 
\end{align*}
As a consequence,
\begin{equation}\label{ch-eq-h3}
\int_{x_1-\delta/4}^{x_1+\delta/4}w(s)\,ds\ge \frac{\widetilde\kappa_0\,\delta^2\,e^{-\delta}}{8}\int_{x_1-\delta/2}^{x_1+\delta/2} w(y)\,dy\ \hbox{ for every }x_1 \in[r_1-\delta_1,r_2+\delta_1].
\end{equation}

We now claim that there exists a positive constant $C_3$ depending only on $|r_1-r_2|$, ${\rm{dist}}([r_1,r_2],\partial\O)$, $J$, $q$ and $a$ such that
\begin{equation}\label{ch-cla-harnack}
\int_{x_1-\delta/4}^{x_1+\delta/4}w(s)\,ds\ge C_3 \int_{x_2-\delta/4}^{x_2+\delta/4}w(s)\,ds\ \hbox{ for all }x_1,x_2\in[r_1-\delta_1,r_2+\delta_1].  
\end{equation}
The claim~\eqref{ch-cla-harnack} is proved at the end of the present section.

Let us then show that the inequalities \eqref{ch-eq-hinf}, and then the desired conclusions~\eqref{harnackbis} and~\eqref{harnack}, follow from the claim~\eqref{ch-cla-harnack}. Indeed, letting
\begin{equation}\label{defN}
N:=\min\left\{n\in \N \ |\ n\ge \frac{4(r_2-r_1 +2\delta_1)}{\delta} \right\},
\end{equation}
one has
$$[r_1-\delta_1,r_2+\delta_2]=\bigcup_{k=0}^{N-1}\left[r_1-\delta_1+\frac{k\delta}{4},\min\Big(r_1-\delta_1+\frac{(k+1)\delta}{4},r_2+\delta_1\Big)\right].$$
Using the claim~\eqref{ch-cla-harnack} with the points $x_2=r_1-\delta_1+k\delta/4$, we deduce that, for every $x_1\in[r_1-\delta_1,r_2+\delta_2]$,
$$\int_{x_1-\delta/4}^{x_1+\delta/4}w(s)\,ds\ge \frac{C_3}{N} \sum_{k=0}^{N-1}\int_{r_1-\delta_1+(k-1)\delta/4}^{r_1-\delta_1+(k+1)\delta/4}w(s)\,ds\ge\frac{C_3}{N}\int_{r_1-\delta_1}^{r_2+\delta_1}w(s)\,ds.$$
Together with  \eqref{ch-eq-h2}, we then get that
$$w(x_1)\ge \frac{C_3\,\widetilde\kappa_0\,\delta\,e^{-\delta}}{N}\int_{r_1-\delta_1}^{r_2+\delta_1}w(s)\,ds,$$
which proves~\eqref{ch-eq-hinf} with $C_2:=C_3\,\widetilde\kappa_0\,\delta\,e^{-\delta}/N>0$, depending only on $|r_1-r_2|$, ${\rm{dist}}([r_1,r_2],\partial\O)$, $J$, $q$ and~$a$. Furthermore, by combining \eqref{ch-eq-hsup} and \eqref{ch-eq-hinf}, we obtain that
$$ w(x_2)\le w(x_0)\le \frac{C_1}{C_2}\,w(x_1) \ \text{ for all }x_1,x_2 \in [r_1,r_2].$$ 

{\it Conclusion.} Finally, by putting together cases 1 and 2 and by letting $C_0:=\max\big(C_1/C_2,e^{r_2-r_1}\big)>0$ ($C_0$ depends only on $|r_1-r_2|$, ${\rm{dist}}([r_1,r_2],\partial\O)$, $J$, $q$ and $a$), we then get the desired Harnack inequality~\eqref{harnackbis} for $w$ and, as already emphasized, this leads to the desired conclusion~\eqref{harnack} and completes the proof of Lemma~\ref{ch-lem-harnack}.
\end{proof}
 
\begin{proof}[Proof of the claim~\eqref{ch-cla-harnack}]
Let $x_1 \in [r_1-\delta_1,r_2+\delta_1]$. From \eqref{ch-eq-h3} we see that for all $z\in[x_1-\delta/4,x_1+\delta/4]$, one has
$$\int_{x_1-\delta/4}^{x_1+\delta/4}w(s)\,ds\ge \frac{\widetilde\kappa_0\,\delta^2\,e^{-\delta}}{8}\int_{z-\delta/4}^{z+\delta/4} w(y)\,dy.$$
Since \eqref{ch-eq-h3} holds true for any $z\in[x_1-\delta/4,x_1+\delta/4]\cap [r_1-\delta_1,r_2+\delta_1]$, we then get that, for any such $z$,
$$\int_{x_1-\delta/4}^{x_1+\delta/4}w(s)\,ds\ge \left(\frac{\widetilde\kappa_0\,\delta^2\,e^{-\delta}}{8}\right)^2\int_{z-\delta/2}^{z+\delta/2} w(y)\,dy.$$
From the above inequality, we straightforwardly deduce that, for every $z\in[x_1-\delta/2,x_1+\delta/2]\cap [r_1-\delta_1,r_2+\delta_1]$,
$$\int_{x_1-\delta/4}^{x_1+\delta/4}w(s)\,ds\ge \left(\frac{\widetilde\kappa_0\,\delta^2\,e^{-\delta}}{8}\right)^2\int_{z-\delta/4}^{z+\delta/4} w(y)\,dy.$$  
With $N\in\N$ as in~\eqref{defN} and arguing by induction, we see that, for every $k\in\{1,\cdots,N\}$ and for every $z\in[x_1-k\delta/4,x_1+k\delta/4]\cap[r_1-\delta_1,r_2+\delta_1]$,
 $$\int_{x_1-\delta/4}^{x_1+\delta/4}w(s)\,ds\ge \left(\frac{\widetilde\kappa_0\,\delta^2\,e^{-\delta}}{8}\right)^k\int_{z-\delta/4}^{z+\delta/4} w(y)\,dy.$$
In particular,  since $[r_1-\delta_1,r_2+\delta_1] \subset[x_1-N\delta/4,x_1+N\delta/4]$, it follows that, for every $z\in[r_1-\delta_1,r_2+\delta_1]$,
$$\int_{x_1-\delta/4}^{x_1+\delta/4}w(s)\,ds\ge \left(\frac{\widetilde\kappa_0\delta^2 e^{-\delta}}{8}\right)^N\int_{z-\delta/4}^{z+\delta/4} w(y)\,dy.$$
Lastly, the point $x_1$ being arbitrary in $[r_1-\delta_1,r_2+\delta_1]$, we get the claimed inequality~\eqref{ch-cla-harnack} with $C_3=(\widetilde{\kappa}_0\delta^2e^{-\delta}/8)^N>0$ depending only on $\widetilde{\kappa}_0$, $\delta$ and $N$, and therefore only on $|r_1-r_2|$, ${\rm{dist}}([r_1,r_2],\partial\O)$, $J$, $q$ and~$a$.
\end{proof}

\begin{remark}\label{remharnack}{\rm It follows from the above proof that, if $J$, $q$, $a$, $\kappa_0$, $\kappa_1$, $\delta_0$, $\delta_1$, $r_1$ and $r_2$ are as in the statement of Lemma~\ref{ch-lem-harnack}, if $\O=(\alpha,\beta)$ with $-\infty\le\alpha<\beta\le+\infty$ and if $\gamma\in(0,+\infty)$ is such that $|r_1-r_2|\le\gamma$, ${\rm{dist}}([r_1-\delta_1,r_2+\delta_1],\partial\O)=\min(r_1-\delta_1-\alpha,\beta-r_2-\delta_1)\ge1/\gamma$, $\|a\|_{L^\infty(r_1-\delta_1-1/(2\gamma),r_2+\delta_1+1/(2\gamma))}+\|1/q\|_{L^\infty(r_1-\delta_1-1/(2\gamma),r_2+\delta_1+1/(2\gamma))}\le\gamma$, then the constant $C>0$ in~\eqref{harnack} can be chosen so that it depends only on $\kappa_0$, $\kappa_1$, $\delta_0$, $\delta_1$ and $\gamma$.}
\end{remark}

\begin{remark}\label{remharnack2}{\rm The assumption $q>0$ is not essential in Lemma~$\ref{ch-lem-harnack}$, in the sense that a similar statement holds as well when $q<0$. In this case, as in the proof of Proposition~$\ref{ch-thm1}$, we can define an {\it ad hoc} symmetrization $\widetilde{u}$ of  $u$, namely $\widetilde{u}(x)=u(r_1+r_2-x)$, that obeys an equation of the type~\eqref{ch-eq-lin} in a new domain $\widetilde{\O}$ and with some new coefficients $\widetilde{J}$, $\widetilde{q}$ and $\widetilde{a}$ satisfying the assumptions of Lemma~$\ref{ch-lem-harnack}$, but with $\widetilde{q}>0$. The Harnack type estimates will then be true for the function $\widetilde{u}$ from the above proof, and thus for~$u$.}
\end{remark}

\begin{remark}\label{remharnack3}{\rm In the De Giorgi  approach to establish the regularity of a solution of a partial differential equation, obtaining pointwise or Harnack type estimates is an essential step to improve the regularity of the solution. Here, our point of view is rather different and we are interested in such pointwise estimates for smooth solutions of an integro-differential equation. In this context, the Harnack type estimates provide a uniform control on the growth/decay of the solutions and they are a key ingredient in some approximation processes, as we will see in the proof of Theorem~$\ref{ch-thm3}$ in Section~$\ref{sec43}$.}
\end{remark}

\begin{remark}{\rm The existence of Harnack type inequalities for fractional operators with a drift is also known for a large class of operators. However, to our knowledge such results seem restricted to the stochastic  community and for particular ``smooth" fractional operators, in the sense that the generator induced by the stochastic process is comparable with a drift, i.e. when the Levy measure is of the order $|z|^{-(N+2s)}$ with $s>1/2$ (see \cite{Bogdan2012}). Our result is a strong indication that, in some particular frameworks, such Harnack type inequalities should also hold for $s<1/2$.}     
\end{remark}

%%%%%%%%%%%%%%%%%%%%%%%%%%%%%%%%%%%%%%%%%%%%%%%%%
%%%%%%%%%%%%%%%%%%%%%%%%%%%%%%%%%%%%%%%%%%%%%%%%%

\section{Principal eigenvalues of nonlocal operators with drifts}\label{sec4}

This section is devoted to the proof of Theorems~\ref{ch-thm2},~\ref{ch-thm3} and Proposition~\ref{ch-prop1}. We investigate the properties of the principal eigenvalue of the nonlocal operator $\mb{\O,J,q}+a$ defined in~\eqref{defoperator}. That is, we focus on the properties of the principal eigenvalue of the spectral problem 
\begin{equation}\label{ch-eq-pev}
q\varphi'+\oplb{\varphi}{\O,J}+a\varphi+\lambda \varphi=0 \ \text{ in }\O,
\end{equation}
with $\varphi\in C^1(\O)\cap C(\overline{\O})$ and $\varphi>0$ in $\O$. Throughout this section, $\O\subset\R$ denotes a non-empty open bounded or unbounded interval, and $J$, $q$ and $a$ satisfy~\eqref{assum1}-\eqref{assum3}. In some statements, assumption~\eqref{assum4} shall be assumed too.

Following~\cite{Berestycki1994}, let $\lambda_p(\mb{\O,J,q}+a)$ be the quantity given in~\eqref{ch-pev-def-lp}. Observe immediately that $\lambda:=-\sup_{x\in\O} \big(\oplb{1}{\O}(x)+a(x)\big)$ is a real number from~\eqref{assum1}-\eqref{assum3}. Hence, by choosing this real number $\lambda$ and the function $\varphi\equiv1$ in the set defined in~\eqref{ch-pev-def-lp}, it follows that
\begin{equation}\label{ineqinf}
\lambda_p(\mb{\O,J,q}+a)\ge-\sup_{x\in\O} \big(\oplb{1}{\O}(x)+a(x)\big)=-\sup_{x\in\O} \Big(\int_\Omega J(x,y)\,dy+a(x)\Big)>-\infty,
\end{equation}
which is the inequality~(iv) of Proposition~\ref{ch-prop1}.

The next main question is then to prove that $\lambda_p(\mb{\O,J,q}\!+\!a)$ is a real number and that it is associated to a positive eigenfunction. Other variational formulas for $\lambda_p(\mb{\O,J,q}\!+\!a)$ shall also be proved. To answer these questions and study the different variational characterizations of $\lambda_p(\mb{\O,J,q}\!+\!a)$,  we first treat separately in Section~\ref{sec41} the case where $\O$ is a bounded domain. We then sketch the proof of Proposition~\ref{ch-prop1} in Section~\ref{sec42}, while Section~\ref{sec43} is devoted to the proof of Theorem~\ref{ch-thm3} in the case of bounded or unbounded domains $\O$.

%%%%%%%%%%%%%%%%%%%%%%%%%%%%%%%%%%%%%%%%%%%%%%%%%

\subsection{The case of a bounded domain $\O$: proof of Theorem~\ref{ch-thm2}} \label{sec41}

We assume in this subsection that $\Omega=(r_1,r_2)$ with $r_1<r_2\in\R$ and that $J$, $q$ and $a$ satisfy~\eqref{assum1}-\eqref{assum4}. We recall that, from~Theorems~\ref{ch-thm1} and~\ref{ch-thm-clw}, there exists an eigenpair $(\mu_1(\mb{\O,J,q}\!+\!a),\varphi_1)$ solving~\eqref{ch-eq-pev} with $\varphi_1\in C^1(\O)\cap C(\overline{\O})$ and $\varphi_1>0$ in $\O$, where
$$\mu_1(\mb{\O,J,q}+a)=\sup\big\{\lambda\in\R\ |\ \exists\,\varphi \in C^1(\O)\cap C(\overline{\O}),\,\varphi>0\hbox{ on }\O,\,\varphi(r_1)=0,\,\opmb{\varphi}{\O,J,q}+a\varphi+\lambda \varphi\le 0\hbox{ in }\O\big\}$$
if $q<0$ in $\O$, whereas 
$$\mu_1(\mb{\O,J,q}+a)=\sup\big\{\lambda\in\R\ |\ \exists\,\varphi \in C^1(\O)\cap C(\overline{\O}),\,\varphi>0\hbox{ on }\O,\,\varphi(r_2)=0,\,\opmb{\varphi}{\O,J,q}+a\varphi+\lambda \varphi\le 0\hbox{ in }\O\big\}$$
if $q>0$ in $\O$.\footnote{This latter quantity in the case $q>0$ is denoted $\widetilde{\mu}_1(\mb{\O,J,q}+a)$ in the introduction. We use here the same notation $\mu_1(\mb{\O,J,q}+a)$ in both cases $q<0$ and $q>0$ for the sake of simplicity of the writing of the following proofs.} Furthermore, $\varphi_1$ is such that $\varphi_1(r_1)=0$ if $q<0$ and $\varphi_1(r_2)=0$ if $q>0$.

\begin{proof}[Proof of Theorem~$\ref{ch-thm2}$]
We first claim that 
\begin{equation}\label{ch-cla-mulp}
\mu_1(\mb{\O,J,q}+a)=\lambda_p(\mb{\O,J,q}+a). 
\end{equation}
Note that this equality implies in particular that $\lambda_p(\mb{\O,J,q}+a)$ is a real number and that there exists a positive eigenfunction of~\eqref{ch-eq-pev} associated to $\lambda=\lambda_p(\mb{\O,J,q}+a)$. For the proof of~\eqref{ch-cla-mulp}, observe first that, as already emphasized in the introduction,
$$\mu_1(\mb{\O,J,q}+a)\le\lambda_p(\mb{\O,J,q}+a)$$
from the definition of~$\lambda_p(\mb{\O,J,q}+a)$ and from the existence of a positive eigenfunction $\varphi_1$ above with the eigenvalue $\mu_1(\mb{\O,J,q}+a)$.

For the proof of the inequality $\mu_1(\mb{\O,J,q}\!+\!a)\ge\lambda_p(\mb{\O,J,q}\!+\!a)$, assume by way of contradiction that $\mu_1(\mb{\O,J,q}\!+\!a)<\lambda_p(\mb{\O,J,q}\!+\!a)$. The two cases $q<0$ and $q>0$ can be treated similarly. We first consider the case
$$q<0.$$
From the existence of the eigenfunction $\varphi_1$ above and from the definition of $\lambda_p(\mb{\O,J,q}+a)$ in~\eqref{ch-pev-def-lp}, it follows that there are two functions $\varphi=\varphi_1$ and $\psi$ in $C^1(\O)\cap C(\overline{\O})$ such that $\varphi,\psi>0$ in $\O$ and 
\begin{align}
&q\varphi' +\oplb{\varphi}{\O,J}+a\varphi +\mu_1(\mb{\O,J,q}\!+\!a)\varphi=0 \ \text{ in }\O=(r_1,r_2),\label{ch-eq-relphi1}\\
&q\psi' +\oplb{\psi}{\O,J}+a\psi +\mu_1(\mb{\O,J,q}\!+\!a)\psi<0\ \text{ in }\O=(r_1,r_2).\label{ch-eq-relpsi}
\end{align}
Furthermore, $\varphi(r_1)=0$.

Let us now check that both $\varphi(r_2)$ and $\psi(r_2)$ are positive. Assume first by way of contradiction that $\varphi(r_2)=0$. From~\eqref{assum1}-\eqref{assum2} and the continuity of $\varphi$ in $[r_1,r_2]$ and its positivity in $(r_1,r_2)$, it follows that $\lim_{x\to r_2,\,x<r_2}\oplb{\varphi}{\O,J}(x)$ exists and is a positive real number, hence $\liminf_{x\to r_2,\,x<r_2}\varphi'(x)>0$ from~\eqref{ch-eq-relphi1} and assumption~\eqref{assum3} together with the negativity of $q$ (we point out that $\lim_{x\to r_2,\,x<r_2}\varphi'(x)$ might not exist, since $q$ is not assumed to be continuous up to the boundary of $\O$). Since one has assumed that $\varphi(r_2)=0$, one then gets that $\varphi<0$ in a left neighborhood of $r_2$, contradicting the positivity of $\varphi$ in $(r_1,r_2)$. Therefore,
$$\varphi(r_2)>0.$$
The same argument shows at once that
$$\psi(r_2)>0.$$
Similarly, we deduce from the equations~\eqref{ch-eq-relphi1}-\eqref{ch-eq-relpsi}, from the assumptions~\eqref{assum2}-\eqref{assum4}, from the negativity of $q$, and from $\varphi(r_1)=0$, that
\begin{equation}\label{phi'}
0<\liminf_{x\to r_1,\,x>r_1}\varphi'(x)\le\limsup_{x\to r_1,\,x>r_1}\varphi'(x)<+\infty
\end{equation}
and
\begin{equation}\label{psi'}
\liminf_{x\to r_1,\,x>r_1}\psi'(x)>0\ \hbox{ if }\psi(r_1)=0.
\end{equation}

From these considerations and since $\varphi$ and $\psi$ are continuous in $\overline{\O}=[r_1,r_2]$ and positive in $\O=(r_1,r_2)$, the following quantity
$$ \gamma^*:=\sup\big\{\gamma>0 \ |\ \psi\ge\gamma \varphi\hbox{ in }[r_1,r_2]\big\}$$
is a positive real number. Let us then define
$$w:=\psi -\gamma^*\varphi\in C^1(\O)\cap C(\overline{\O}).$$
The function $w$ is nonnegative in $[r_1,r_2]$ and it satisfies
$$qw' +\oplb{w}{\O,J}+aw +\mu_1(\mb{\O,J,q}\!+\!a)w<0\ \hbox{ in }(r_1,r_2).$$
Owing to the definition of $\gamma^*$ and from the continuity of $\varphi$ and $\psi$ in the compact set $[r_1,r_2]$, there is then $x_0\in[r_1,r_2]$ such that $w(x_0)=0=\min_{[r_1,r_2]}w$. We shall get a contradiction in the following three cases: $x_0\in(r_1,r_2)$, $x_0=r_2$ and $x_0=r_1$. First of all, if $x_0\in(r_1,r_2)$, then $w'(x_0)=0$ and by evaluating the inequality satisfied by $w$ at $x_0$ we end up with the contradiction 
$$0\le \int_{\O}J(x_0,y)w(y)\,dy<0.$$ 
One can then assume without loss of generality that $x_0\in\{r_1,r_2\}$ and that $w>0$ in $(r_1,r_2)$. If $x_0=r_2$, then $w(r_2)=0$ and one gets as above from~\eqref{assum1}-\eqref{assum3} and the negativity of $q$ that $\lim_{x\to r_2,\,x<r_2}\oplb{w}{\O,J}(x)$ exists and is a positive real number, and that $\liminf_{x\to r_2,\,x<r_2}w'(x)>0$, a contradiction with the positivity of $w$ in $(r_1,r_2)$. Therefore, $x_0=r_1$, hence $\psi(r_1)=\varphi(r_1)=0$ and, as for $\psi$ in~\eqref{psi'}, it follows that $\liminf_{x\to r_1,\,x>r_1}w'(x)>0$. Together with~\eqref{phi'} and the positivity of $w$ in $(r_1,r_2]$ and the boundedness of $\varphi$ in $[r_1,r_2]$, one infers the existence of $\eps>0$ such that $w\ge\eps\varphi$ in $[r_1,r_2]$. Hence, $\psi\ge(\gamma^*+\eps)\varphi$ in $[r_1,r_2]$, contradicting the definition of $\gamma^*$.

As a consequence, the inequality $\mu_1(\mb{\O,J,q}\!+\!a)<\lambda_p(\mb{\O,J,q}\!+\!a)$ can not hold, and the proof of~\eqref{ch-cla-mulp} is done in the case $q<0$.

The case $q>0$ can be handled similarly, by inverting the roles of $r_1$ and $r_2$.
\vskip 0.2cm

In order to complete the proof of Theorem~\ref{ch-thm2}, let us then show that
$$\lambda_p'(\mb{\O,J,q}\!+\!a)=\lambda_p(\mb{\O,J,q}\!+\!a),$$
where $\lambda_p'(\mb{\O,J,q}\!+\!a)$ is defined in~\eqref{deflambdap'}. We again only consider the case $q<0$, since the case $q>0$ can be handled similarly by inverting the roles of $r_1$ and $r_2$. From~\eqref{ch-cla-mulp}, the desired equality amounts to showing that
$$\lambda_p'(\mb{\O,J,q}\!+\!a)=\mu_1(\mb{\O,J,q}\!+\!a).$$
To do so, remember first from Theorem~\ref{ch-thm-clw} the existence of an eigenpair $(\mu_1(\mb{\O,J,q}\!+\!a),\varphi_1)$ solving~\eqref{ch-eq-pev} with $\varphi_1\in C^1(\O)\cap C(\overline{\O})$, $\varphi_1>0$ in $\O$ and $\varphi_1(r_1)=0$. One immediately infers from definition~\eqref{deflambdap'} of $\lambda_p'(\mb{\O,J,q}\!+\!a)$ that 
$$\lambda_p'(\mb{\O,J,q}\!+\!a)\le\mu_1(\mb{\O,J,q}\!+\!a).$$
In order to show the reverse inequality, assume by way of contradiction that $\lambda_p'(\mb{\O,J,q}\!+\!a)<\mu_1(\mb{\O,J,q}\!+\!a)$. From the above considerations and the definition of $\lambda_p'(\mb{\O,J,q}+a)$ in~\eqref{deflambdap'}, it follows that there are two functions $\varphi=\varphi_1$ and $\psi$ in $C^1(\O)\cap C(\overline{\O})$ such that $\varphi,\psi>0$ in $\O$, $\varphi(r_1)=\psi(r_1)=0$, and 
\begin{align*}
&q\varphi' +\oplb{\varphi}{\O,J}+a\varphi +\mu_1(\mb{\O,J,q}\!+\!a)\varphi=0 \ \text{ in }\O=(r_1,r_2),\\
&q\psi' +\oplb{\psi}{\O,J}+a\psi +\mu_1(\mb{\O,J,q}\!+\!a)\psi>0\ \text{ in }\O=(r_1,r_2).
\end{align*}
From the first part of the present proof, one knows that $\varphi(r_2)>0$ and $\liminf_{x\to r_1,\,x>r_1}\varphi'(x)>0$. Furthermore, the limit $\lim_{x\to r_1,\,x>r_1}\oplb{\psi}{\O,J}(x)$ exists and is a positive real number from~\eqref{assum1}-\eqref{assum2}, hence $\limsup_{x\to r_1,\,x>r_1}\psi'(x)<+\infty$ from~\eqref{assum4} and the negativity of $q$, together with the above inequation satisfied by $\psi$ in $\O$. Therefore, the quantity
$$ \rho^*:=\sup\big\{\rho>0 \ |\ \varphi\ge\rho\psi\hbox{ in }[r_1,r_2]\big\}$$
is a positive real number. Let us then define
$$w:=\varphi -\rho^*\psi\in C^1(\O)\cap C(\overline{\O}).$$
The function $w$ is nonnegative in $[r_1,r_2]$ and it satisfies
$$qw' +\oplb{w}{\O,J}+aw +\mu_1(\mb{\O,J,q}\!+\!a)w<0\ \hbox{ in }(r_1,r_2).$$
As in the first part of the present proof, one then infers that $w>0$ in $(r_1,r_2]$ and that $w(r_1)=0$, hence $\liminf_{x\to r_1,\,x>r_1}w'(x)>0$ and $w\ge\eps\psi$ in $[r_1,r_2]$ for some $\eps>0$. In other words, $\varphi\ge(\rho^*+\eps)\psi$ in $[r_1,r_2]$, a contradiction with the definition of $\rho^*$.

As a conclusion, the inequality $\lambda_p'(\mb{\O,J,q}\!+\!a)<\mu_1(\mb{\O,J,q}\!+\!a)$ can not hold. Finally, $\lambda_p'(\mb{\O,J,q}\!+\!a)=\mu_1(\mb{\O,J,q}\!+\!a)$ and the proof of Theorem~\ref{ch-thm2} is thereby complete.
\end{proof}

%%%%%%%%%%%%%%%%%%%%%%%%%%%%%%%%%%%%%%%%%%%%%%%%%

\subsection{Proof of Proposition~\ref{ch-prop1}}\label{sec42}

In this section the non-empty open interval $\Omega$ may be bounded or unbounded. Properties (i)-(iii) and~(v) of Proposition~\ref{ch-prop1} are inherent to the definition of $\lambda_p(\mb{\O,J,q}+a)$ in~\eqref{ch-pev-def-lp}. Furthermore, the inequality $\lambda_p(\mb{\O,J,q}+a)\ge-\sup_{x\in\O}\big(a(x)+\int_{\O}J(x,y)\,dy\big)>-\infty$ in part~(iv) has already been shown in~\eqref{ineqinf} at the beginning of Section~\ref{sec4}. Lastly, the fact that $\lambda_p(\mb{\O,J,q}\!+\!a)$ is a real number (namely, one has the upper inequality $\lambda_p(\mb{\O,J,q}\!+\!a)<+\infty$) in a direct consequence of the monotonicity property in part~(i) of Proposition~\ref{ch-prop1}, together with Proposition~\ref{ch-thm1} and Theorems~\ref{ch-thm2} and~\ref{ch-thm-clw} (these results imply in particular that $\lambda_p(\mb{\O,J,q}\!+\!a)$ is always a real number when $\Omega$ is bounded). The proof of Proposition~\ref{ch-prop1} is thereby complete.\hfill$\Box$

%%%%%%%%%%%%%%%%%%%%%%%%%%%%%%%%%%%%%%%%%%%%%%%%%

\subsection{Growing domains: proof of Theorem~\ref{ch-thm3}}\label{sec43}

Let $\O$, $J$, $q$, $a$ and $(\Omega_n)_{n\in\N}$ be as in the statement. It follows from part~(i) of Proposition~\ref{ch-prop1} that the sequence $(\lambda_p(\mb{\O_n,J,q}\!+\!a))_{n\in\N}$  is non-increasing and that $\lambda_p(\mb{\O_n,J,q}\!+\!a)\ge\lambda_p(\mb{\O,J,q}\!+\!a)$ for all $n\in\N$.  Therefore, there is a real number $\lambda$ such that
\begin{equation}\label{lambda}
\lambda_p(\mb{\O_n,J,q}\!+\!a)\ \mathop{\longrightarrow}_{n\to+\infty}\ \lambda\,\ge\,\lambda_p(\mb{\O,J,q}\!+\!a).
\end{equation}
Let us now show that there is a solution $\varphi\in C^1(\O)\cap C(\overline{\O})$ of~\eqref{eq} with this value $\lambda$ and such that $\varphi>0$ in $\O$.

For each $n\in\N$, since $\Omega_n$ is a non-empty bounded domain, it follows from Propositions~\ref{ch-thm1} and~\ref{ch-prop1} and Theorems~\ref{ch-thm2} and~\ref{ch-thm-clw} that $\lambda_p(\mb{\O_n,J,q}\!+\!a)\in\R$ and that there exists a function $\varphi_n\in C^1(\Omega_n)\cap C(\overline{\Omega_n})$ such that $\varphi_n>0$ in $\O_n$ and
$$q\varphi'_n +\int_{\O_n}\!\!J(\cdot,y)\varphi_n(y)\,dy +a\varphi_n+\lambda_p(\mb{\O_n,J,q}\!+\!a)\varphi_n=\opmb{\varphi_n}{\O_n,J,q}+a\varphi_n +\lambda_p(\mb{\O_n,J,q}\!+\!a)\varphi_n=0\ \text{ in }\O_n$$
(one can further say that either $\varphi_n(\alpha_n)=0$ or $\varphi_n(\beta_n)=0$ according to the sign of $q$ in $\O$, by setting $\O_n=(\alpha_n,\beta_n)$). One shall then consider four cases for the limiting domain $\O$: $\Omega=(\alpha,\beta)$, $\O=(\alpha,+\infty)$, $\O=(-\infty,\beta)$, and $\O=\R$ (where $\alpha,\beta\in\R$). The proof in both situations $q>0$ and $q<0$ being similar,  we shall treat only exhaustively the case $q>0$.    
\vskip 0.2cm

{\it Case 1: $\O$ is a bounded domain, say $\O=(\alpha,\beta)$ with $\alpha<\beta\in\R$}. Up to normalization, one can assume without loss of generality that
$$\max_{\overline{\Omega_n}}\varphi_n=1$$
(hence, $0\le\varphi_n\le1$ in $\overline{\Omega_n}$). It then follows from~\eqref{assum1}-\eqref{assum4} and the equations satisfied by the functions $\varphi_n$ that the sequence $(\|\varphi_n'\|_{L^\infty(\O_n)})_{n\in\N}$ is bounded. One then infers from Arzel\`a-Ascoli theorem that there is a function $\varphi\in C(\O)$ such that, up to extraction of a subsequence,
$$\varphi_n\to\varphi\ \hbox{ as }n\to+\infty,\hbox{ locally uniformly in }\O.$$
Furthermore, the function $\varphi$ is Lipschitz-continuous in $\O$ and can then be extended continuously in $\overline{\O}$. Lastly, $0\le\varphi\le1$ in $\overline{\O}$ and $\max_{\overline{\O}}\varphi=1$.

On the other hand, it easily follows from~\eqref{assum1}-\eqref{assum4} that
$$\psi_n:=-\frac{1}{q}\left(\int_{\O_n}J(\cdot,y)\varphi_n(y)dy+a\varphi_n+\lambda_p(\mb{\O_n,J,q}\!+\!a)\varphi_n\right)\mathop{\longrightarrow}_{n\to+\infty}-\frac{1}{q}\left(\int_{\O}J(\cdot,y)\varphi(y)dy+a\varphi+\lambda\varphi\right)=:\psi$$
locally uniformly in $\O$ (in particular, for the convergence of the integral terms, one uses the boundedness of $J$ and $\varphi$, together with the uniform boundedness of the functions $\varphi_n$). For any $\eta<\xi\in\O$, one has, for all $n$ large enough, $[\eta,\xi]\subset\O_n$  and
$$\varphi_n(\xi)-\varphi_n(\eta)=\int_\eta^\xi\psi_n(x)dx,$$
hence
$$\varphi(\xi)-\varphi(\eta)=\int_\eta^\xi\psi(x)dx.$$
Since the function $\psi$ is itself continuous in $\O$, one gets that $\varphi\in C^1(\O)$ and
\begin{equation}\label{eqvarphi}
q\varphi' +\int_{\O}J(\cdot,y)\varphi(y)\,dy +a\varphi+\lambda\varphi=\opmb{\varphi}{\O,J,q}+a\varphi +\lambda\varphi=0\ \text{ in }\O.
\end{equation}
In other words, $\varphi$ solves~\eqref{eq} in $\O$ with the eigenvalue $\lambda$. Furthermore, from~\eqref{assum2} and the above equation satisfied by the nonnegative function $\varphi$, the closed set of zeroes of $\varphi$ in $\O$ is also open. Hence, if $\varphi$ vanishes somewhere in $\O$, then $\varphi$ vanishes everywhere in $\O$ and then in $\overline{\O}$ by continuity, a contradiction with the property $\max_{\overline{\O}}\varphi=1$.

As a consequence, the function $\varphi$ is a solution of~\eqref{eq} in $\O$ with the eigenvalue $\lambda$, such that $\varphi\in C^1(\O)\cap C(\overline{\O})$ and $\varphi>0$ in $\O$. From the definition of $\lambda_p(\mb{\O,J,q}\!+\!a)$ in~\eqref{ch-pev-def-lp}, one infers that $\lambda\le\lambda_p(\mb{\O,J,q}\!+\!a)$, and finally $\lambda=\lambda_p(\mb{\O,J,q}\!+\!a)$ thanks to~\eqref{lambda}.

\vskip 0.2cm
{\it Case 2: $\O=(\alpha,+\infty)$ for some $\alpha\in\R$}. This time, one normalizes the functions $\varphi_n$ so that
$$\max_{(\alpha,\alpha+4\delta_1]\cap\overline{\O_n}}\varphi_n=1,$$
for all $n$ large enough such that $(\alpha,\alpha+4\delta_1]\cap\overline{\O_n}$ is a non-empty closed segment. We recall that $\delta_1>0$ is given in assumption~\ref{assum2}.

On the one hand, it follows from~\eqref{assum1}-\eqref{assum4} and the equations satisfied by the functions $\varphi_n$, together with the above normalization, that the sequence $(\|\varphi_n'\|_{L^\infty((\alpha,\alpha+3\delta_1)\cap\O_n)})_{n\in\N}$ is bounded. Hence, up to extraction of a subsequence, the functions $\varphi_n$ converge locally uniformly in $(\alpha,\alpha+3\delta_1]$ to a Lipschitz-continuous function. Notice that this function can then be extended continuously at the point $\alpha$. 

On the other hand, for any $\beta\ge\alpha+3\delta_1$, there is $n_\beta\in\N$ such that $[\alpha+\delta_1,\beta+2\delta_1]\subset\Omega_n$ for all $n\ge n_\beta$. Since the sequences $(\lambda_p(\mb{\O_n,J,q}\!+\!a))_{n\in\N}$ and $(\varphi_n(\alpha+3\delta_1))_{n\ge n_\beta}$ are bounded, and since the sets $\O_n$ are growing with respect to $n$, it then follows from Lemma~\ref{ch-lem-harnack} and Remark~\ref{remharnack} that
$$\sup_{n\ge n_\beta}\max_{[\alpha+2\delta_1,\beta+\delta_1]}\varphi_n<+\infty.$$
Futhermore, from~\eqref{assum1}-\eqref{assum3} and the equations satisfied by the functions $\varphi_n$, one then gets that
$$\sup_{n\ge n_\beta}\max_{[\alpha+3\delta_1,\beta]}|\varphi'_n|<+\infty.$$
In other words, the sequence of nonnegative functions $(\varphi_n)_{n\ge n_\beta}$ is bounded in $C^1([\alpha+3\delta_1,\beta])$, for every $\beta\ge\alpha+3\delta_1$. From the previous paragraph together with Arzel\`a-Ascoli theorem and the diagonal extraction process, one infers that there is a function $\varphi\in C(\overline{\O})$ such that, up to extraction of a subsequence, $\varphi_n\to\varphi$ as $n\to+\infty$ locally uniformly in $\O$. As above, it then follows from~\eqref{assum1}-\eqref{assum3} that
$$\psi_n:=-\frac{1}{q}\left(\int_{\O_n}J(\cdot,y)\varphi_n(y)dy+a\varphi_n+\lambda_p(\mb{\O_n,J,q}\!+\!a)\varphi_n\right)\mathop{\longrightarrow}_{n\to+\infty}-\frac{1}{q}\left(\int_{\O}J(\cdot,y)\varphi(y)dy+a\varphi+\lambda\varphi\right)=:\psi$$
locally uniformly in $\O$, hence $\varphi(\xi)-\varphi(\eta)=\int_\eta^\xi\psi(x)dx$ for any $\alpha<\eta<\xi$. Since the function $\psi$ is itself continuous in $\O$, one gets that $\varphi\in C^1(\O)$ and~\eqref{eqvarphi} holds.

Lastly, since $\max_{(\alpha,\alpha+4\delta_1]\cap\overline{\O_n}}\varphi_n=1$, since $\varphi_n\ge0$ in $\overline{\O_n}$ and since the sequence $(\|\varphi_n'\|_{L^\infty((\alpha,\alpha+3\delta_1)\cap\O_n)})_{n\in\N}$ is bounded, one infers that $\max_{[\alpha,\alpha+4\delta_1]}\varphi=1$. As above, one concludes that the nonnegative function $\varphi$ is actually positive in $\O$. In other words, the function $\varphi$ is a $C^1(\O)\cap C(\overline{\O})$ solution of~\eqref{eq} in $\O$ with the value $\lambda$, such that $\varphi>0$ in $\O$. As in case~1, this yields $\lambda=\lambda_p(\mb{\O,J,q}\!+\!a)$.

\vskip 0.2cm
{\it Case 3: $\O=(-\infty,\beta)$ for some $\beta\in\R$}. This case can be handled similarly to case~2.

\vskip 0.2cm
{\it Case 4: $\O=\R$}. Pick any point $x_0$ in $\O_0$. One has $x_0\in\O_n$ for each $n\in\N$ and up to normalization, one assumes here without loss of generality that
$$\varphi_n(x_0)=1\ \hbox{ for each }n\in\N.$$
For any $c<d$ in $\R$ such that $x_0\in[c,d]$, there is $n_{c,d}\in\N$ such that $[c-2\delta_1,d+2\delta_1]\subset\Omega_n$ for all $n\ge n_{c,d}$. Since the sequence $(\lambda_p(\mb{\O_n,J,q}\!+\!a))_{n\in\N}$ is bounded, and since the sets $\O_n$ are growing with respect to $n$, it then follows from Lemma~\ref{ch-lem-harnack} and Remark~\ref{remharnack} that
\begin{equation}\label{infsup}
0<\inf_{n\ge n_{c,d}}\min_{[c-\delta_1,d+\delta_1]}\varphi_n\le\sup_{n\ge n_{c,d}}\max_{[c-\delta_1,d+\delta_1]}\varphi_n<+\infty.
\end{equation}
From~\eqref{assum1}-\eqref{assum3} and the equations satisfied by the functions $\varphi_n$, one then gets that
$$\sup_{n\ge n_{c,d}}\max_{[c,d]}|\varphi'_n|<+\infty.$$
In other words, the sequence $(\varphi_n)_{n\ge n_{c,d}}$ is bounded in $C^1([c,d])$, for every $c<d\in\R$ such that $x_0\in[c,d]$. From Arzel\`a-Ascoli theorem and the diagonal extraction process, one then infers that there is a function $\varphi\in C(\R)$ such that, up to extraction of a subsequence, $\varphi_n\to\varphi$ as $n\to+\infty$ locally uniformly in $\R$. Furthermore, $\varphi>0$ in $\R$ from~\eqref{infsup}. It then easily follows from~\eqref{assum1}-\eqref{assum3} that
$$\psi_n:=-\frac{1}{q}\left(\int_{\O_n}J(\cdot,y)\varphi_n(y)dy+a\varphi_n+\lambda_p(\mb{\O_n,J,q}\!+\!a)\varphi_n\right)\mathop{\longrightarrow}_{n\to+\infty}-\frac{1}{q}\left(\int_{\R}J(\cdot,y)\varphi(y)dy+a\varphi+\lambda\varphi\right)=:\psi$$
locally uniformly in $\R$, and, as above, $\varphi(\xi)-\varphi(\eta)=\int_\eta^\xi\psi(x)dx$ for every $\eta<\xi\in\R$. Since the function $\psi$ is itself continuous in $\R$, one gets that $\varphi\in C^1(\R)$ and satisfies~\eqref{eqvarphi} in $\R$. Since $\varphi$ is positive in~$\R$, this implies as in case~1 that $\lambda=\lambda_p(\mb{\O,J,q}\!+\!a)$. The proof of Theorem~\ref{ch-thm3} is thereby complete.\hfill$\Box$
  
%%%%%%%%%%%%%%%%%%%%%%%%%%%%%%%%%%%%%%%%%%%%%%%%%
%%%%%%%%%%%%%%%%%%%%%%%%%%%%%%%%%%%%%%%%%%%%%%%%%

\bibliographystyle{amsplain}
\small{\bibliography{ch.bib}}

\providecommand{\bysame}{\leavevmode\hbox to3em{\hrulefill}\thinspace}
\providecommand{\MR}{\relax\ifhmode\unskip\space\fi MR }
% \MRhref is called by the amsart/book/proc definition of \MR.
\providecommand{\MRhref}[2]{%
  \href{http://www.ams.org/mathscinet-getitem?mr=#1}{#2}
}
\providecommand{\href}[2]{#2}
\begin{thebibliography}{10}

\bibitem{Akian2014}
M.~Akian, S.~Gaubert, and R.~Nussbaum, \emph{{A Collatz-Wielandt
  characterization of the spectral radius of order-preserving homogeneous maps
  on cones}}, arXiv:1112.5968.

\bibitem{Berestycki2016b}
H.~Berestycki, J.~Coville, and H.-H. Vo, \emph{On the definition and the
  properties of the principal eigenvalue of some nonlocal operators}, J. Funct.
  Anal. \textbf{271} (2016), 2701--2751.

\bibitem{Berestycki2016a}
\bysame, \emph{Persistence criteria for populations with non-local dispersion},
  J. Math. Biol. \textbf{72} (2016), 1693--1745.

\bibitem{Berestycki2009}
H.~Berestycki, O.~Diekmann, C.~J. Nagelkerke, and P.~A. Zegeling, \emph{Can a
  species keep pace with a shifting climate?}, Bull. Math. Biol. \textbf{71}
  (2009), 399--429.

\bibitem{Berestycki2005}
H.~Berestycki, F.~Hamel, and L.~Roques, \emph{Analysis of the periodically
  fragmented environment model. {I}. {S}pecies persistence}, J. Math. Biol.
  \textbf{51} (2005), 75--113.

\bibitem{Berestycki2007}
H.~Berestycki, F.~Hamel, and L.~Rossi, \emph{Liouville-type results for
  semilinear elliptic equations in unbounded domains}, Ann. Mat. Pura Appl. (4)
  \textbf{186} (2007), 469--507.

\bibitem{Berestycki1994}
H.~Berestycki, L.~Nirenberg, and S.~R.~S. Varadhan, \emph{The principal
  eigenvalue and maximum principle for second-order elliptic operators in
  general domains}, Comm. Pure Appl. Math. \textbf{47} (1994), 47--92.

\bibitem{Berestycki2006}
H.~Berestycki and L.~Rossi, \emph{On the principal eigenvalue of elliptic
  operators in {$\Bbb R\sp N$} and applications}, J. Eur. Math. Soc. (JEMS)
  \textbf{8} (2006), 195--215.

\bibitem{Berestycki2015}
\bysame, \emph{Generalizations and properties of the principal eigenvalue of
  elliptic operators in unbounded domains}, Comm. Pure Appl. Math. \textbf{68}
  (2015), 1014--1065.

\bibitem{Bogdan2012}
K.~Bogdan and T.~Jakubowski, \emph{{Estimates of the Green function for the
  fractional Laplacian perturbed by gradient}}, Potential Analysis \textbf{36}
  (2012), 455--481.

\bibitem{Cantrell1989}
R.~S. Cantrell and C.~Cosner, \emph{Diffusive logistic equations with
  indefinite weights: population models in disrupted environments}, Proc. Roy.
  Soc. Edinburgh A \textbf{112} (1989), 293--318.

\bibitem{Cantrell1991a}
\bysame, \emph{The effects of spatial heterogeneity in population dynamics}, J.
  Math. Biol. \textbf{29} (1991), 315--338.

\bibitem{Cantrell1998}
\bysame, \emph{On the effects of spatial heterogeneity on the persistence of
  interacting species}, J. Math. Biol. \textbf{37} (1998), 103--145.

\bibitem{Collatz1942}
L.~Collatz, \emph{{Einschlie{\ss}ungssatz f{\"u}r die charakteristischen Zahlen
  von Matrizen}}, Math. Zeitschrift \textbf{48} (1942), 221--226.

\bibitem{Coville2010}
J.~Coville, \emph{On a simple criterion for the existence of a principal
  eigenfunction of some nonlocal operators}, J. Diff. Equations \textbf{249}
  (2010), 2921--2953.

\bibitem{Coville2012}
\bysame, \emph{Harnack type inequality for positive solution of some integral
  equation}, Ann. Mat. Pura Appl. \textbf{191} (2012), 503--528.

\bibitem{Coville2015}
\bysame, \emph{Nonlocal refuge model with a partial control}, Discrete Contin.
  Dyn. Syst. \textbf{35} (2015), 1421--1446.

\bibitem{Coville2013}
J.~Coville, J.~D\'avila, and S.~Mart{\'\i}nez, \emph{{Pulsating fronts for
  nonlocal dispersion and KPP nonlinearity}}, Ann. Inst. Henri Poincar\'e,
  Anal. Non Lin\'eaire \textbf{30} (2013), 179--223.

\bibitem{Coville2017a}
J.~Coville, F.~Li, and X.~Wang, \emph{On eigenvalue problems arising from
  nonlocal diffusion models}, Discrete Contin. Dyn. Syst. \textbf{37} (2017),
  879--903.

\bibitem{Donsker1975}
M.~D. Donsker and S.~R.~S. Varadhan, \emph{On a variational formula for the
  principal eigenvalue for operators with maximum principle}, Proc. Nat. Acad.
  Sci. U.S.A. \textbf{72} (1975), 780--783.

\bibitem{Englander2004}
J.~Engl{\"a}nder and A.~E. Kyprianou, \emph{Local extinction versus local
  exponential growth for spatial branching processes}, Ann. Probab. \textbf{32}
  (2004), 78--99.

\bibitem{Fife1996}
P.~C. Fife, \emph{{An integrodifferential analog of semilinear parabolic
  {PDE}s}}, Partial differential equations and applications, Lecture Notes in
  Pure and Appl. Math., vol. 177, Dekker, New York, 1996, pp.~137--145.

\bibitem{Garcia-Melian2009b}
J.~Garcia-Melian and J.~D. Rossi, \emph{On the principal eigenvalue of some
  nonlocal diffusion problems}, J. Diff. Equations \textbf{246} (2009), 21--38.

\bibitem{Grinfeld2005}
M.~Grinfeld, G.~Hines, V.~Hutson, K.~Mischaikow, and G.~T. Vickers,
  \emph{Non-local dispersal}, Diff. Int. Equations \textbf{18} (2005),
  1299--1320.

\bibitem{Hutson2003}
V.~Hutson, S.~Mart{\'{\i}}nez, K.~Mischaikow, and G.~T. Vickers, \emph{The
  evolution of dispersal}, J. Math. Biol. \textbf{47} (2003), 483--517.

\bibitem{Ignat2012}
L.~I. Ignat, J.~D. Rossi, and A.~San~Antolin, \emph{Lower and upper bounds for
  the first eigenvalue of nonlocal diffusion problems in the whole space}, J.
  Diff. Equations \textbf{252} (2012), 6429--6447.

\bibitem{Kao2010}
C.-Y. Kao, Y.~Lou, and W.~Shen, \emph{Random dispersal vs. nonlocal dispersal},
  Discrete Contin. Dyn. Syst. \textbf{26} (2010), 551--596.

\bibitem{Lemmens2012}
B.~Lemmens and R.~Nussbaum, \emph{{Nonlinear Perron-Frobenius theory}},
  Cambridge University Press, 2012.

\bibitem{Lutscher2005}
F.~Lutscher, E.~Pachepsky, and M.~A. Lewis, \emph{The effect of dispersal
  patterns on stream populations}, SIAM Review \textbf{47} (2005), 749--772.

\bibitem{Nadin2009b}
G.~Nadin, \emph{The principal eigenvalue of a space--time periodic parabolic
  operator}, Ann. Mat. Pura Appl. \textbf{188} (2009), 269--295.

\bibitem{Nussbaum1992}
R.~D. Nussbaum and Y.~Pinchover, \emph{On variational principles for the
  generalized principal eigenvalue of second order elliptic operators and some
  applications}, J. Anal. Math. \textbf{59} (1992), 161--177.

\bibitem{Pinsky1995a}
R.~G. Pinsky, \emph{Positive harmonic functions and diffusion}, vol.~45,
  Cambridge University Press, 1995.

\bibitem{Pinsky1995}
\bysame, \emph{Second order elliptic operator with periodic coefficients:
  criticality theory, perturbations, and positive harmonic functions}, J.
  Funct. Anal. \textbf{129} (1995), 80--107.

\bibitem{Pinsky1996}
\bysame, \emph{Transience, recurrence and local extinction properties of the
  support for supercritical finite measure-valued diffusions}, Ann. Probab.
  \textbf{24} (1996), 237--267.

\bibitem{Rossi2009}
L.~Rossi, \emph{Liouville type results for periodic and almost periodic linear
  operators}, Ann. Inst. Henri Poincar\'e, Anal. Non Lin\'eaire \textbf{26}
  (2009), 2481--2502.

\bibitem{Shen2010}
W.~Shen and A.~Zhang, \emph{Spreading speeds for monostable equations with
  nonlocal dispersal in space periodic habitats}, J. Diff. Equations
  \textbf{249} (2010), 747--795.

\bibitem{Shen2012}
\bysame, \emph{Stationary solutions and spreading speeds of nonlocal monostable
  equations in space periodic habitats.}, Proc. Amer. Math. Soc. \textbf{140}
  (2012), 1681--1696.

\bibitem{Kawasaki1997}
N.~Shigesada and K.~Kawasaki, \emph{{Biological invasions: theory and
  practice}}, Oxford Series in Ecology and Evolution, Oxford University Press,
  1997.

\bibitem{Turchin1998}
P.~Turchin, \emph{{Quantitative analysis of movement: measuring and modeling
  population redistribution in animals and plants}}, Sinauer Associates, 1998.

\bibitem{Wielandt1950}
H.~Wielandt, \emph{{Unzerlegbare, nicht negative Matrizen}}, Math. Zeitschrift
  \textbf{52} (1950), 642--648.

\bibitem{Zhou2010}
Y.~Zhou and M.~Kot, \emph{Discrete-time growth-dispersal models with shifting
  species ranges}, Theor. Ecology \textbf{4} (2010), 13--25.

\end{thebibliography}

\end{document}